\newtheorem{thm}{Theorem}[section]
\newtheorem{lem}[thm]{Lemma}
\newtheorem{defn}[thm]{Definition}
\numberwithin{equation}{section}
\theoremstyle{remark}
\newtheorem{rem}{Remark}
\newcommand{\al}{\alpha}
\newcommand{\ld}{\lambda}
\begin{document}

\newcommand{\sar}{S_\al(\mathbb R)}
\newcommand{\xar}{X_\al(\mathbb R)}
\newcommand{\yar}{Y_\al(\mathbb R)}

\newcommand{\ald}{\mathcal A}

\newcommand{\blb}{\big[}
\newcommand{\brb}{\big]}
\newcommand{\re}{\mbox{Re}}
\newcommand{\mlf}{\mathbf{m}_{1,\ld}}
\newcommand{\mlft}{\widetilde{\mathbf{m}}_{1, \ld}}
\newcommand{\mls}{\mathbf{m}_{2, \ld}}
\newcommand{\im}{\mbox{Im}}
\newcommand{\bl}{\big\langle}
\newcommand{\br}{\big\rangle}

\newcommand{\hlp}{H_{\mathbf L,\; p}}
\newcommand{\hl}{H_{\mathbf L}}
\newcommand{\hj}{H_{\mathbf J}}

\title[Inhomogeneous cubic-quintic NLS]
      {Well-posedness and scattering of inhomogeneous cubic-quintic NLS}

\author[Y. Cho]{Yonggeun Cho}
\address{Department of Mathematics, and Institute of Pure and Applied Mathematics, Chonbuk National University, Jeonju 561-756, Republic of Korea}
\email{changocho@jbnu.ac.kr}

\begin{abstract}
In this paper we consider inhomogeneous cubic-quintic NLS in space dimension $d = 3$:
$$
iu_t = -\Delta u + K_1(x)|u|^2u + K_2(x)|u|^4u.
$$
We study local well-posedness, finite time blowup, and small data scattering and non-scattering for the ICQNLS when $K_1, K_2 \in C^4(\mathbb R^3 \setminus \{0\})$ satisfy growth condition $|\partial^j K_i(x)| \lesssim |x|^{b_i-j}\, (j = 0, 1, 2, 3, 4)$ for some $b_i \ge 0$ and for $x \neq 0$. To this end we use the Sobolev inequality for the functions $f \in H^n \,(n = 1, 2)$ such that $\||\mathbf L|^\ell f\|_{H^n} < \infty \,(\ell = 1, 2)$, where $\mathbf L$ is the angular momentum operator defined by $\mathbf L = x \times (-i\nabla)$.
\end{abstract}

\thanks{2010 {\it Mathematics Subject Classification.} 35Q55, 35Q53. }
\thanks{{\it Key words and phrases.} inhomogeneous NLS, well-posedness, finite time blowup, small data scattering,  Sobolev inequality, angular momentum condition}

\maketitle

\newcommand{\na}{|\nabla|^\al}
\section{Introduction}
In this paper we consider the following Cauchy problem for inhomogeneous cubic-quintic nonlinear Schr\"odinger equations of the form:
\begin{align}\label{main eqn}\left\{\begin{array}{l}
i\partial_tu = -\Delta u + K_1(x)Q_1(u) + K_2(x)Q_2(u)\;\;
\mbox{in}\;\;\mathbb{R}^{1+3},
\\
u(x,0) = \varphi(x),
\end{array}\right.
\end{align}
where $Q_1(u) = |u|^2u, Q_2(u) = |u|^4u$, and $K_l \in C^4(\mathbb R^3 \setminus \{0\}; \mathbb C)$. The model of ICQNLS \eqref{main eqn} can be a dilute BEC when both the two- and three-body interactions of the condensate are considered. For this see \cite{bpvt, ts} and the references therein. Also it has been considered to study the laser guiding in an axially nonuniform plasma channel. For this see \cite{gill, ss, ts}.

In this paper we consider ICQNLS with $K_l$ satisfying the growth condition: for some constants $b_1, b_2 \ge 0$
\begin{align}\label{ass-k}
|\partial^j K_l| \lesssim |x|^{b_l - j}, \,j = 0, 1, 2, 3, 4,\, l = 1, 2,
\end{align}
where $\partial$ is one of the partial derivatives $\partial_j, j = 1, 2, 3$.
Some basic notations are listed at the end of this section.

By Duhamel's formula, \eqref{main eqn} is written as an
integral equation
\begin{equation}\label{integral}
u = e^{it\Delta}\varphi -i \int_0^t e^{i(t-t')\Delta}[K_1(x)Q_1(u(t')) + K_2(x)Q_2(u(t'))]\,dt'.
\end{equation}
Here we define the linear propagator $e^{it\Delta}$ given by the linear problem $i\partial_t v =  -{\Delta} v$ with initial datum $v(0) = f$. It is
formally given  by
\begin{align}\label{int eqn}
e^{it{\Delta}}f = \mathcal F^{-1} (e^{-it|\xi|^2} \mathcal F(f)) = (2\pi)^{-3}\int_{\mathbb{R}^3} e^{i( x\cdot \xi - t|\xi|^2 )}\widehat{f}(\xi)\,d\xi,
\end{align}
where $\widehat{f} = \mathcal F( f)$ denotes the Fourier transform of $f$ and $\mathcal F^{-1}$ the inverse Fourier transform such that
$$\mathcal{F}(f)(\xi) = \int_{\mathbb{R}^3} e^{- ix\cdot \xi} f(x)\,dx,\quad \mathcal F^{-1} (g)(x) = (2\pi)^{-d}\int_{\mathbb{R}^3} e^{ix\cdot \xi} g(\xi)\,d\xi.$$

If $K_l$ are real-valued, then we can define mass and energy of the solution $u$ of \eqref{main eqn} as follows:
\begin{align*}
m(u(t)) &:= \|u(t)\|_{L_x^2}^2,\\
E(u(t)) &:= \frac12\|\nabla u\|_{L_x^2}^2 + \frac14\int K_1(x)|u(t, x)|^4\,dx + \frac16\int K_2(x)|u(t, x)|^6\,dx.
\end{align*}
We say that the mass and the energy of solutions are conserved if they are constant with respect to time.

The aim of this paper is to establish a well-posedness theorey, a finite time blowup, and a scattering theory for suitable growth rate $b_1, b_2 \ge 0$. In case that $K_l$ are radially symmetric, the authors \cite{che, cg, zhu} considered well-posedness, finite time blowup and stability of radial solutions. The main obstacle of that problems is the growth of $K_l$ at infinity. To avoid this Sobolev inequalities of radial $H^1$ functions were utilized. However, nothing in general cases has been known about the global behavior like scattering as far as we know. For other work treating bounded or decaying coefficients like $|x|^{-b}$ see \cite{fiwa, lww, mer, rs} or \cite{fuoh, coge, far}, respectively.

 To circumvent the lack of symmetry of $K_l$ and growth at space infinity, we suggest alternatives of radial symmetry, the angular momentum conditions, for which we introduce the angular momentum operator $\mathbf L$:
$$
\mathbf L = (L_1, L_2, L_3) = x \times (-i\nabla).
$$
It is well-known that $|\mathbf L|^2 (\,:= \mathbf L \cdot \mathbf L = \sum_{j = 1, 2, 3}L_j^2) = -\Delta_{S^2}$, where $\Delta_{S^2}$ is the Laplace-Beltrami operator on the unit sphere. Now we define Sobolev spaces $H_{\mathbf L,\; p}^{n,\; \ell} (n, \ell = 0, 1, 2, 1 \le p \le \infty)$ associated with $\mathbf L$ as follows:
\begin{align*}
\hlp^{0,\; \ell} &= L_x^p,\quad \hlp^{1,\; 0} = H_p^1,\\
H_{\mathbf L,\; p}^{n, 1} &= \{f \in \hlp^{n-1,\; 2} \cap H^n : \|f\|_{H_{\mathbf L, p}^{n,\; 1}} := \|f\|_{H_p^n} + \|f\|_{\hlp^{n-1,\; 2}} +  \|\mathbf L f\|_{H_p^n} < \infty\},\\
H_{\mathbf L,\; p}^{n,\; 2} &= \{f \in \hlp^{n,\; 1} : \|f\|_{H_{\mathbf L, p}^{n,\; 2}} := \|f\|_{H_{\mathbf L,\; p}^{n,\; 1}} + \||\mathbf L|^2 f\|_{H_p^n} < \infty\},
\end{align*}
Here $H_p^n$ denotes the standard $L^p$ Sobolev space. If $p = 2$, then we drop $p$ and denote $ H_{\mathbf L,\; 2}^{n, \;\ell}$ by $\hl^{n, \;\ell}$ . These spaces give us Sobolev type inequalities associated angular momentum such as $\||x|^bf\|_{L_x^\infty} \lesssim \|f\|_{\hl^{1, 1}}$ for $0 < b < 1$ and $\||x|f\|_{L_x^\infty} \lesssim \|f\|_{\hl^{1,\; 2}}$ (see Lemma \ref{ang-sobo} below).


Our first result is on the local well-posedness, whose definition is the following.
\begin{defn}\label{lwp defn}
The equation \eqref{main eqn} is said to be locally well-posed $\hl^{n, \;\ell}$ if there exist maximal existence time interval $I_* = (-T_*, T^*)$ and a unique solution $u \in C(I_*, \hl^{n,\; \ell})$ with continuous dependency on the initial data and blowup alternative $(T^* < \infty \Rightarrow \lim_{t \to T^*}\|u(t)\|_{\hl^{n,\; \ell}} = \infty)$.
\end{defn}

\begin{thm}\label{lwp}
$(1)$ If $b_1 = b_2 = 0$, then \eqref{main eqn} is locally well-posed in $H^1$.\\
$(2)$ If $n-1 < b_1 < n$ and $0 \le b_2 < n+2$ for $n = 1, 2$, then \eqref{main eqn} is locally well-posed in $\hl^{n,\; 1}$.\\
$(3)$ If $b_1 = n$ and $0 \le b_2 \le n+2$ for $n = 1, 2$, then \eqref{main eqn} is locally well-posed in $\hl^{n,\; 2}$. \\
$(4)$ If $K_l$ are real-valued, then in any cases the mass and the energy are conserved.
\end{thm}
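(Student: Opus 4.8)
The plan is to solve the Duhamel equation \eqref{integral} by a contraction mapping argument, carried out not in $H^n$ but in the angular spaces $\hl^{n,\ell}$. Two structural facts drive everything. First, the angular momentum operator $\mathbf L = x\times(-i\nabla)$ commutes with $-\Delta$, hence with $e^{it\Delta}$, so every Strichartz estimate for the free propagator is inherited by $L_j$, $L_jL_k$ and $|\mathbf L|^2$ applied to the solution; this lets me commute $\mathbf L$-derivatives through the Duhamel integral and reduces matters to estimating $\mathbf L^kN(u)$, where $N(u)=K_1Q_1(u)+K_2Q_2(u)$. Second, each $L_j$ is a first-order operator obeying the Leibniz rule $L_j(fg)=(L_jf)g+f(L_jg)$, and, crucially, its action on the coefficients does not worsen their growth: from \eqref{ass-k} one gets $|L_jK_l|\le|x|\,|\nabla K_l|\lesssim|x|^{b_l}$ and, inductively, $|\partial^j\mathbf L^kK_l|\lesssim|x|^{b_l-j}$ for $j+k\le4$. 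Thus differentiating $N(u)$ angularly produces only terms in which $K_l$ (or an angular derivative of it) still carries the benign weight $|x|^{b_l}$. I would set up the contraction in the intersection of $C(I;\hl^{n,\ell})$ with the $\mathbf L$-weighted Strichartz norms attached to a $\dot H^1$-admissible pair.

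The heart of the proof is the multilinear estimate for $N(u)$, and here the roles of $b_1$ and $b_2$ separate. For the energy-subcritical cubic term the growth $|x|^{b_1}$ is absorbed by placing one factor into a weighted $L^\infty_x$ via the angular Sobolev inequalities of Lemma \ref{ang-sobo}, the remaining factors being controlled in the Strichartz norms, with a positive power of $|I|$ supplying the contraction smallness. When $n-1<b_1<n$ one uses $\||x|^{b_1}f\|_{L^\infty_x}\lesssim\|f\|_{\hl^{n,1}}$ and the estimate closes in $\hl^{n,1}$, which is Part (2); at the borderline $b_1=n$ the growth exactly saturates this inequality and one is forced onto the endpoint $\||x|^nf\|_{L^\infty_x}\lesssim\|f\|_{\hl^{n,2}}$, hence onto the larger space $\hl^{n,2}$, which is Part (3). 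The quintic term is energy-critical in $\mathbb R^3$ even for $b_2=0$, so it cannot be closed in $C_tH^1$ alone; I would control it in the energy-critical space-time norm, the needed smallness on a short interval coming from $\|e^{it\Delta}\varphi\|\to0$ in that norm as $|I|\to0$. Its more generous admissible range $b_2<n+2$ (resp.\ $\le n+2$) is precisely the threshold at which the weighted multilinear Strichartz estimate closes, larger than the cubic threshold by $2$ because the five factors allow more of the weight to be spread before the $L^\infty_x$ endpoints are reached. The one genuinely delicate point inside these estimates is that Leibniz applied to the spatial derivatives of $N(u)$ generates terms in which $K_l$ itself is differentiated, carrying the \emph{singular} weight $|x|^{b_l-n}$ near the origin; these I would handle by a fractional Hardy inequality $\||x|^{-s}g\|_{L^2}\lesssim\||\nabla|^sg\|_{L^2}$ ($0<s<\tfrac32$) together with a fractional Leibniz rule, the hypothesis $b_l>n-1$ being exactly what keeps $s=n-b_l$ inside the admissible Hardy range.

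Once these estimates are in place, $\Phi(u)=e^{it\Delta}\varphi-i\int_0^te^{i(t-s)\Delta}N(u)\,ds$ is a contraction on a small ball of the above space, yielding a unique local solution; continuous dependence follows from the same difference estimates and the blowup alternative of Definition \ref{lwp defn} from the standard continuation argument. Part (1) is the special case $b_1=b_2=0$: the coefficients and their derivatives up to order $4$ are bounded, no weights or angular structure are needed, and the scheme reduces to the usual energy-critical local theory in $H^1$.

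For Part (4) I would argue by regularization: the conservation identities are first established for smooth, sufficiently decaying solutions and then propagated to $\hl^{n,\ell}$ solutions by density and the continuous dependence just proved. Pairing \eqref{main eqn} with $u$ in $L^2$ and taking imaginary parts, the Laplacian contributes nothing and the nonlinear contribution equals $\im\int(K_1|u|^4+K_2|u|^6)\,dx$, which vanishes precisely because $K_1,K_2$ are real; hence $\tfrac{d}{dt}m(u)=0$. For the energy, the reality of $K_l$ is exactly what makes $E$ real-valued and puts \eqref{main eqn} in Hamiltonian form $iu_t=E'(u)$, so that $\tfrac{d}{dt}E(u)=\re\langle E'(u),u_t\rangle=\re\langle iu_t,u_t\rangle=0$. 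The main obstacle throughout is the multilinear estimate at the two borderlines $b_1=n$ and $b_2=n+2$: it is the tension between the growth of $K_l$ at spatial infinity, which saturates the angular Sobolev inequalities and dictates the passage from $\hl^{n,1}$ to $\hl^{n,2}$, and the origin singularity $|x|^{b_l-n}$ of the differentiated coefficients, mediated by the angular spaces and Hardy's inequality, that makes closing the estimate delicate.
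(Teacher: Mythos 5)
Your framework — contraction in angular Strichartz spaces, commuting $\mathbf L$ through $e^{it\Delta}$, the key bound $|\partial^j\mathbf L^k K_l|\lesssim|x|^{b_l-j}$, and the regularization argument for Part (4) — is the same as the paper's, and your treatment of $n=1$ and of the conservation laws is essentially correct. However, the mechanism you propose for $n=2$ rests on inequalities that are false. You invoke $\||x|^{b_1}f\|_{L_x^\infty}\lesssim\|f\|_{\hl^{2,1}}$ for $1<b_1<2$ and $\||x|^2f\|_{L_x^\infty}\lesssim\|f\|_{\hl^{2,2}}$. For radial $f$ all angular norms vanish, so these would assert $\||x|^bf\|_{L_x^\infty}\lesssim\|f\|_{H^2}$ for radial $f$ and some $b>1$; this fails. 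Take $f=\sum_{k\ge1}2^{-k(1+\epsilon)}\psi(|x|-2^k)$ with $\psi\in C_c^\infty$ and $0<\epsilon<b-1$: then $\|f\|_{H^2}^2\lesssim\sum_k 2^{-2k\epsilon}<\infty$, $\mathbf Lf=0$, yet $\sup|x|^b|f|\gtrsim 2^{k(b-1-\epsilon)}\to\infty$. The decay rate $|x|^{-1}$ in Lemma \ref{ang-sobo} is the endpoint; additional derivatives, spatial or angular, do not improve it. Hence ``placing one factor into a weighted $L_x^\infty$'' cannot absorb a weight $|x|^{b_1}$ with $b_1>1$, and your Part (2)--(3) argument breaks down exactly where the theorem becomes nontrivial.

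The repair — and what the paper actually does — is to \emph{distribute} the weight over several factors of the nonlinearity so that each carries an exponent at most $1$ and only the first-order inequalities of Lemma \ref{ang-sobo} are ever used: e.g.\ $\||x|^{b_1}|u|^2|\partial^2u|\|_{L_x^2}\le\||x|^{b_1/2}u\|_{L_x^\infty}^2\|\partial^2u\|_{L_x^2}$ and $\||x|^{b_1}|\partial u|^2|\mathbf Lu|\|_{L_x^2}\le\||x|^{b_1/2}\partial u\|_{L_x^\infty}^2\|\mathbf Lu\|_{L_x^2}$. This forces you to apply the angular Sobolev inequality to $\partial u$ and $\mathbf Lu$ themselves, hence to control $\mathbf L\partial u$, $\mathbf L^2 u$, $\mathbf L\partial^2u$, etc.; that is precisely why the paper runs the contraction for $1<b_1<2$ in the intersection $\hl^{1,2}\cap\hl^{2,1}$ rather than in $\hl^{2,1}$ alone, why the cubic range stops at $b_1=2$ (two spare $u$-factors, each absorbing at most $|x|^1$), and why the quintic threshold is larger by $2$ — the splitting you do gesture at for $Q_2$ is in fact needed already for $Q_1$ once $b_1\ge1$. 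A secondary inaccuracy: the restriction $b_1>n-1$ is not what keeps $s=n-b_1$ in the admissible range of the Hardy inequality of Lemma \ref{hs-ineq} (that would only require $b_1>n-\tfrac32$); in the paper the origin singularity is always removed by a single Hardy factor $\||x|^{-1}u\|_{L_x^2}\lesssim\|\nabla u\|_{L_x^2}$, and the constraint $b_1>n-1$ is what keeps the residual weight on the remaining factors nonnegative and below the angular Sobolev exponent $1$ after splitting.
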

We prove this theorem via standard contraction mapping theorem. If $b_1 \le 2, b_2 \le 4$, we can control the growing coefficients by using Sobolev inequality associated with angular momentum. For example we need to estimate $\||x|^{b_1}|\partial u|^2|\mathbf L u|\|_{L_x^2}$, which can be done by the bound $\||x|^{\frac{b_1}2}u\|_{L_x^\infty}^2\|\mathbf L u\|_{L_x^2} \lesssim \|u\|_{\hl^{n, \ell}}^3$. In case that $b_1 > 2$ we cannot control it only with Sobolev inequality. To this end one can try to show the local well-posedness for the initial data with higher regularity and additional weight condition $(|x|^k \varphi \in L_x^2, k = 1, 2, \cdots)$. We will not pursue this issue here. The local well-posedness results are far away from the sharpness of regularity on the space and angle. One may improve them via fractional Sobolev space and fractional Leibniz rule \cite{fgo}.

The next result is on the finite time blowup when the initial energy is negative.
\begin{thm}\label{blowup}
Let $K_l$ be real-valued function such that $K_1 - x\cdot \nabla K_1 \le \alpha K_1$ and $4K_2 - x\cdot \nabla K_2 \le \alpha K_2$ for some $\alpha \ge 0$. Let $u$ be the local solution of \eqref{main eqn} as in Theorem \ref{lwp} with $|x|\varphi \in L_x^2$. Suppose that  $E(\varphi) < 0$. Then the solution blows up in finite time.
\end{thm}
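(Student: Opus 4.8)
The plan is to run the classical virial (Glassey) concavity argument on the weighted variance. Set
$$V(t) := \||x|u(t)\|_{L_x^2}^2 = \int_{\mathbb{R}^3} |x|^2|u(t,x)|^2\,dx,$$
which is finite and continuous in $t$ since $|x|\varphi \in L_x^2$ and this localization is propagated by the flow of Theorem \ref{lwp}. The first step is to differentiate twice in time. Using \eqref{main eqn} and integrating by parts one obtains $V'(t) = 4\,\im\int \bar u\,(x\cdot\nabla u)\,dx$, and then, differentiating once more with the commutator identity $[x\cdot\nabla,\Delta]=-2\Delta$ together with the reality of $K_1,K_2$, the virial identity
$$V''(t) = 8\|\nabla u\|_{L_x^2}^2 + 2\int (3K_1 - x\cdot\nabla K_1)|u|^4\,dx + \tfrac43\int(6K_2 - x\cdot\nabla K_2)|u|^6\,dx.$$
Here the free part contributes $8\|\nabla u\|_{L_x^2}^2$, while each nonlinear term produces a multiple of $x\cdot\nabla K_l$ together with a multiple of $K_l$, the latter arising from an integration by parts against $\nabla\cdot(K_l x)=x\cdot\nabla K_l+3K_l$. (One can sanity-check the coefficients against the homogeneous focusing cases $K_1\equiv-1$ and $K_2\equiv-1$, which reproduce the standard identities $V''=8\|\nabla u\|_{L_x^2}^2-6\int|u|^4$ and $V''=8\|\nabla u\|_{L_x^2}^2-8\int|u|^6$.)

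The second step is to eliminate the gradient term using conservation of energy. Since $K_l$ are real-valued, Theorem \ref{lwp}(4) gives $E(u(t))\equiv E(\varphi)$, and subtracting $16E(\varphi)$ cancels $8\|\nabla u\|_{L_x^2}^2$ exactly, leaving
$$V''(t) = 16E(\varphi) + 2\int(K_1 - x\cdot\nabla K_1)|u|^4\,dx + \tfrac43\int(4K_2 - x\cdot\nabla K_2)|u|^6\,dx.$$
Now the structural hypotheses enter. Since $|u|^4,|u|^6\ge 0$, the bounds $K_1 - x\cdot\nabla K_1\le\alpha K_1$ and $4K_2 - x\cdot\nabla K_2\le\alpha K_2$ yield
$$V''(t)\le 16E(\varphi) + \alpha\Big(2\int K_1|u|^4\,dx + \tfrac43\int K_2|u|^6\,dx\Big) = 16E(\varphi) + 8\alpha\Big(E(\varphi) - \tfrac12\|\nabla u\|_{L_x^2}^2\Big),$$
the last equality again using the definition of $E$. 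Discarding the nonpositive term $-4\alpha\|\nabla u\|_{L_x^2}^2$ (recall $\alpha\ge0$) gives the key differential inequality $V''(t)\le 8(2+\alpha)E(\varphi)$.

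The final step is the concavity conclusion: since $E(\varphi)<0$ and $\alpha\ge0$, the right-hand side is a strictly negative constant, so $V(t)\le V(0)+V'(0)t + 4(2+\alpha)E(\varphi)t^2\to -\infty$, contradicting $V(t)\ge0$. Hence the solution cannot be global, and by the blowup alternative in Definition \ref{lwp defn} it must blow up in finite time. The main obstacle is not the algebra but the rigorous justification of the virial identity at the regularity of the solutions produced by Theorem \ref{lwp}: one must verify that $t\mapsto V(t)$ is genuinely $C^2$ with the stated derivatives, and that the weight $|x|u\in L_x^2$ is propagated by the flow. I would handle this in the standard way, first proving the identity for smooth, fast-decaying approximations (equivalently, replacing $|x|^2$ by a truncated weight $R^2\theta(x/R)$, computing the truncated virial, and letting $R\to\infty$), controlling the resulting commutator and boundary error terms via the growth bounds \eqref{ass-k} and the angular-momentum Sobolev inequalities of Lemma \ref{ang-sobo}, and then passing to the limit using continuous dependence on the data.
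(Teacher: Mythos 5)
Your proposal is correct and follows essentially the same route as the paper: the Glassey virial/concavity argument, with the identical identity $V''=16E(\varphi)+2\int(K_1-x\cdot\nabla K_1)|u|^4+\tfrac43\int(4K_2-x\cdot\nabla K_2)|u|^6$ (the paper states it in integrated form for $\mathcal A(t)={\rm Im}\int\bar u\,x\cdot\nabla u\,dx$), the same use of the structural hypotheses and energy conservation to reach $V(t)\le V(0)+V'(0)t+(8+4\alpha)E(\varphi)t^2$, and the same regularization/truncation scheme (the paper uses the Gaussian weight $e^{-\varepsilon|x|^2}$ and $H^2$ approximation of the data) to justify the identity at the regularity of Theorem \ref{lwp}.
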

If $K_1 = -|x|^{-b_1}$ and $K_2 = -|x|^{b_2}$, then the condition on $K_l$ implies that $b_1 \le \alpha +1$ and $b_2 \le \alpha +4$. We use the standard virial argument for which the weight condition $|x|\varphi \in L_x^2$ and the sign condition of the coefficients $K_l$ are necessary. Once a regular solution exists even for $b_1 > 2$, the finite time blowup can be shown by the same argument.

Now we consider a small data scattering.
\begin{defn}\label{scattering defn}
We say that a solution $u$ to \eqref{main eqn} scatters (to $u_\pm$) in a Hilbert space $\mathcal H$ if there exist  $\varphi_\pm \in \mathcal H$ $($with $u_\pm(t) = e^{it{\Delta}}\varphi_\pm)$ such that $\lim_{t  \to \pm\infty}\|u(t) - u_\pm\|_{\mathcal H} = 0$.
\end{defn}

Our small data scattering is the following.
\begin{thm}\label{scattering}
Let $0 \le b_1 < \frac23$ and $0 \le b_2 < \frac83$.  If $\|\varphi\|_{H_{\mathbf L}^{1, 1}}$ is sufficiently small, then there exists a unique $u \in (C \cap L^\infty)(\mathbb R ; H_{\mathbf L}^{1, \;1})$ to \eqref{main eqn} and $u^\pm \in H_{\mathbf L}^{1, \;1}$ to which $u$ scatters in $H_{\mathbf L}^{1, \;1}$.
\end{thm}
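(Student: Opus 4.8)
The plan is to construct the global solution by a fixed-point argument in a space of Strichartz type built over all of $\R\times\R^3$, and then to read off scattering from the finiteness of the resulting global space--time norm. The starting point is the Duhamel formula \eqref{integral} together with two algebraic facts about $\mathbf L=x\times(-i\nabla)$. First, $\mathbf L$ commutes with $-\Delta$ and hence with the propagator, so $\mathbf L e^{it\Delta}=e^{it\Delta}\mathbf L$, and the commutator $[\partial_j,\mathbf L]$ is again a first-order operator of the same order as $\nabla$. Second, by the Leibniz rule $\mathbf L(K_lQ_l(u))=(\mathbf L K_l)Q_l(u)+K_l\,\mathbf L Q_l(u)$, where $|\mathbf L K_l|=|x\times\nabla K_l|\lesssim|x|\,|\nabla K_l|\lesssim|x|^{b_l}$ by \eqref{ass-k}; that is, applying $\mathbf L$ to the coefficient does not worsen its growth. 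These two facts let me run the contraction simultaneously on the quantities $u,\ \nabla u,\ \mathbf L u,\ \nabla\mathbf L u$, whose $L_t^\infty L_x^2$ norms together control $\|u(t)\|_{\hl^{1,1}}$.

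Concretely, I would fix a pair of admissible Strichartz exponents for $d=3$ and define the resolution norm $\|u\|_X$ as the sum of the space--time Strichartz norms of $u,\nabla u,\mathbf L u$ and $\nabla\mathbf L u$ over $\R\times\R^3$. The homogeneous and inhomogeneous Strichartz inequalities, combined with the commutation above, give the linear bound $\|e^{it\Delta}\varphi\|_X\lesssim\|\varphi\|_{\hl^{1,1}}$ and reduce the map $\Phi(u)=e^{it\Delta}\varphi-i\int_0^te^{i(t-s)\Delta}\sum_lK_lQ_l(u)\,ds$ to a nonlinear estimate of $D\big(K_lQ_l(u)\big)$ in the dual Strichartz norm for each $D\in\{1,\nabla,\mathbf L,\nabla\mathbf L\}$.

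The heart of the proof is this nonlinear estimate, and it is where the ranges $0\le b_1<\tfrac23$ and $0\le b_2<\tfrac83$ are used. After the Leibniz expansion every term carries a coefficient growing like $|x|^{b_l}$ (the worst ones being $K_l\,\nabla\mathbf L Q_l(u)$ and $(\mathbf L K_l)\,\nabla Q_l(u)$), and the growth is tamed by transferring the weight onto factors of $u$ through the angular Sobolev inequality $\||x|^{b}f\|_{L_x^\infty}\lesssim\|f\|_{\hl^{1,1}}$ of Lemma~\ref{ang-sobo} (valid for $0<b<1$), while the remaining factors and the derivative are placed in Strichartz norms carrying time decay. Since a single application of the Sobolev inequality can absorb strictly less than one unit of growth, the cubic term forces essentially one weighted factor and the quintic term requires the weight to be spread over several of its factors; matching the Hölder exponents to Strichartz-admissible pairs with finite time exponents then yields exactly the thresholds $b_1<\tfrac23$ for $K_1Q_1(u)$ and $b_2<\tfrac83$ for $K_2Q_2(u)$, together with an estimate of the schematic form $\|\Phi(u)\|_X\lesssim\|\varphi\|_{\hl^{1,1}}+\|u\|_X^3+\|u\|_X^5$. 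I expect the quintic term to be the main obstacle: it is the energy-critical power, it supplies the largest admissible weight $b_2$, and it is the most delicate to distribute because the top-order piece $K_2\,\nabla\mathbf L(|u|^4u)$ and the commutator piece $(\mathbf L K_2)\,\nabla(|u|^4u)$ must both be controlled in the same globally time-integrable norm.

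Granting the nonlinear estimate, the remainder is routine. For $\|\varphi\|_{\hl^{1,1}}$ small, $\Phi$ maps a small ball of $X$ into itself and is a contraction (by the analogous difference estimate), which produces a unique global $u\in(C\cap L^\infty)(\R;\hl^{1,1})$ with $\|u\|_X<\infty$. Finiteness of the global Strichartz norm makes $t\mapsto e^{-it\Delta}u(t)=\varphi-i\int_0^te^{-is\Delta}\sum_lK_lQ_l(u)\,ds$ Cauchy in $\hl^{1,1}$ as $t\to\pm\infty$; setting $\varphi_\pm=\varphi-i\int_0^{\pm\infty}e^{-is\Delta}\sum_lK_lQ_l(u)\,ds\in\hl^{1,1}$ and applying the nonlinear estimate to the tail $\int_t^{\pm\infty}$ gives $\|u(t)-e^{it\Delta}\varphi_\pm\|_{\hl^{1,1}}\to0$, which is the claimed scattering.
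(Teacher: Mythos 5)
Your plan follows the paper's proof essentially step for step: a global-in-time contraction for $u,\nabla u,\mathbf L u,\nabla\mathbf L u$ in Strichartz norms (the paper works with the endpoint pair $(2,6)$ together with $(10,\tfrac{30}{13})$), the Leibniz expansion of $\partial\mathbf L(K_lQ_l(u))$ using $|\mathbf L K_l|\lesssim |x|^{b_l}$, absorption of the weight by the angular Sobolev inequality of Lemma \ref{ang-sobo} interpolated against $L^p$ with $2\le p\le 6$ --- which is precisely Lemma \ref{ang-decay} and is exactly where the thresholds $b_1<\tfrac23$ and $b_2<\tfrac83$ arise --- and scattering read off from finiteness of the global space--time norm via the tail of the Duhamel integral. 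One point your sketch glosses over: the terms in which a derivative lands on the coefficient, namely $(\partial K_l)\,\mathbf L Q_l(u)$ and $(\partial\mathbf L K_l)\,Q_l(u)$, carry the weight $|x|^{b_l-1}$, which for $b_l<1$ is \emph{singular} at the origin rather than merely growing, so it cannot be tamed by the angular Sobolev inequality alone; the paper splits off the extra $|x|^{-1}$ and disposes of it by Hardy's inequality (Lemma \ref{hs-ineq}), placing one unweighted factor in $\dot H^1$. With that ingredient added, your argument closes exactly as in the paper.
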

For the proof we carry out nonlinear estimates with constants not depending on the local time. This is possible due to the endpoint Strichartz estimates and Sobolev inequality associated with angular momentum, when $b_l$ are small enough not to make nonlinearity super-critical in energy. One can study this type result for a general nonlinearity $|x|^b|u|^\alpha u (2 < \alpha < 4)$, for which see the nonlinear estimate in Remark \ref{gen-case} below.

If $b_1$ is big, then we expect a non-scattering. Here we give a sufficient condition as follows.
\begin{thm}\label{nonscattering}
Assume that $K_1(x) = |x|^{b_1}$ and $K_2(x) = |x|^{b_2}$ for $b_1, b_2 > 0$, and $\lambda \in \mathbb R$.  Let $u$ be a smooth global solution of \eqref{main eqn} with $b_1 \ge 2$ and $0 < b_2 < 3+b_1$, which scatters to $u_\pm = e^{it\Delta}\varphi_\pm$ in $L_x^2$ for some smooth function $\varphi_\pm$. Then $u, u_\pm \equiv 0$.
\end{thm}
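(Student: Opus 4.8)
The plan is to reduce the whole statement to the single assertion $\varphi_+ = 0$ and then argue by contradiction. Since $K_l = |x|^{b_l}$ are real-valued, the mass is conserved by Theorem \ref{lwp}(4), so $\|u(t)\|_{L^2} = \|\varphi_+\|_{L^2}$ for every $t$; in particular, once we know $\varphi_+=0$, mass conservation forces $u\equiv 0$ and hence $u_\pm\equiv 0$, while the endpoint $t\to-\infty$ follows by the time reversal $t\mapsto -t,\ u\mapsto \bar u$. Write $v(t) = e^{it\Delta}\varphi_+$ and $\mathcal N(u) = K_1|u|^2u + K_2|u|^4u$. The core of the argument is a pairing identity: differentiating $J(t) := \langle u(t),v(t)\rangle_{L^2}$ and using $iu_t = -\Delta u + \mathcal N(u)$, $iv_t = -\Delta v$ together with the self-adjointness of $\Delta$, the two Laplacian contributions cancel and one gets $J'(t) = -i\langle \mathcal N(u),v\rangle$. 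Because $L^2$-scattering means $e^{-it\Delta}u(t)\to\varphi_+$ in $L^2$, we have $J(t)\to \|\varphi_+\|_{L^2}^2\in\mathbb R$, so $\mathrm{Im}\,J$ has a finite limit $0$; integrating $\mathrm{Im}\,J'(t) = -\mathrm{Re}\langle\mathcal N(u),v\rangle$ then yields the finiteness of $\int_0^\infty \mathrm{Re}\langle \mathcal N(u(t)),v(t)\rangle\,dt = \mathrm{Im}\,J(0)$ (the improper integral converges precisely because $\mathrm{Im}\,J(T)$ does).

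Next I would split $\mathrm{Re}\langle\mathcal N(u),v\rangle = \rho(t) + \mathrm{Re}\langle \mathcal N(u)-\mathcal N(v),v\rangle$, where $\rho(t):=\langle\mathcal N(v),v\rangle = \int K_1|v|^4\,dx + \int K_2|v|^6\,dx \ge 0$ depends only on $\varphi_+$. The driving mechanism is that $\rho$ fails to be time-integrable exactly when $b_1\ge 2$: from the stationary-phase asymptotics $v(t,x)\approx (2it)^{-3/2}e^{i|x|^2/4t}\widehat{\varphi_+}(x/2t)$ one computes $\int |x|^{b_1}|v(t)|^4\,dx \sim c\,t^{b_1-3}$ with $c = c(\varphi_+)>0$ when $\varphi_+\neq 0$, and $\int^\infty t^{b_1-3}\,dt = \infty$ iff $b_1\ge 2$. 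The cubic term alone forces this divergence, so if $\varphi_+\neq 0$ then $\int_0^\infty \rho\,dt = \infty$; the hypotheses $0<b_2<3+b_1$ are needed only to keep the quintic contributions subordinate. The contradiction will be against the finiteness obtained above, provided the cross term $\mathrm{Re}\langle \mathcal N(u)-\mathcal N(v),v\rangle$ is negligible relative to $\rho$.

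Establishing that negligibility is the crux, and the main obstacle. Using $|\mathcal N(u)-\mathcal N(v)|\lesssim |x|^{b_1}(|u|^2+|v|^2)|u-v| + |x|^{b_2}(|u|^4+|v|^4)|u-v|$ and Hölder, the pure-$v$ pieces are immediately harmless: $\int |x|^{b_1}|v|^3|u-v|\,dx \le \||x|^{b_1}|v|^3\|_{L^2}\|u-v\|_{L^2} \sim t^{b_1-3}\,o(1)$ since $\|u-v\|_{L^2}\to 0$, so they are $o(\rho)$. The genuinely hard pieces are those carrying high powers of $u$, e.g. $\int |x|^{b_1}|u|^2|v||u-v|\,dx$: bounding these by $o(1)\,\rho$ requires dispersive decay of $u$ itself (such as $\|u(t)\|_{L^4}\lesssim t^{-3/4}$ or a weighted $L^\infty$ bound), which $L^2$-scattering does not by itself supply. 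Here I would invoke the smoothness hypothesis on $u$, concretely uniform-in-time bounds in a weighted angular Sobolev space combined with the Strichartz and angular Sobolev inequalities of Lemma \ref{ang-sobo}, to give these mixed terms the same $t^{b_1-3}$ scaling and hence $|\langle \mathcal N(u)-\mathcal N(v),v\rangle| \le \varepsilon(t)\rho(t)$ with $\varepsilon(t)\to 0$. Granting this, the identity gives $\int_0^T \rho\,dt \le \mathrm{Im}\,J(0) + \int_0^T \varepsilon(t)\rho(t)\,dt$, and absorbing the last term for large $T$ bounds $\int_0^T\rho$ uniformly, contradicting $\int_0^\infty\rho\,dt=\infty$. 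Therefore $\varphi_+=0$, and by the reductions $u\equiv u_\pm\equiv 0$.
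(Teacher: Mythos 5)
Your overall architecture coincides with the paper's: pair $u$ against the free evolution $v=e^{it\Delta}\varphi_+$, note that the imaginary part of the pairing is bounded while its time derivative equals $\mathrm{Re}\langle K_1Q_1(u)+K_2Q_2(u),v\rangle$, extract the pure-$v$ quartic term as a lower bound of order $t^{b_1-3}$ (non-integrable precisely when $b_1\ge 2$), and show everything else is $o(t^{b_1-3})$. Your handling of the leading term and of the pieces involving only $v$ is fine (the paper obtains the lower bound slightly differently, via Cauchy--Schwarz against $\int_{\delta t\le|x|\le kt}|u_+|^2\,dx\sim\|\varphi_+\|_{L^2}^2$ rather than the stationary-phase profile, but both are standard for smooth $\varphi_+$).

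The genuine gap is at the step you yourself flag as the crux: the terms carrying high powers of $u$, e.g. $\int|x|^{b_1}|u|^2|v||u-v|\,dx$ and the entire quintic contribution $\int|x|^{b_2}|u|^5|v|\,dx$. Your proposed remedy --- uniform-in-time bounds in a weighted angular Sobolev space combined with Strichartz and Lemma \ref{ang-sobo} --- does not close this: uniform-in-time Sobolev or weighted-$L^\infty$ bounds carry no decay in $t$, and Strichartz norms give space-time integrability, not the pointwise-in-time rate $t^{b_1-3}$ needed for the weighted quartic and sextic integrals of $u$ itself. The paper's resolution is Lemma \ref{pot-decay}: from the pseudo-conformal identity built on \eqref{j} and \eqref{dilation},
$$\frac{d}{dt}\int\bigl[|\mathbf J u|^2+8t^2V(u)\bigr]\,dx=-4t\Bigl[\tfrac{1-b_1}{2}\int|x|^{b_1}|u|^4\,dx+\tfrac{4-b_2}{3}\int|x|^{b_2}|u|^6\,dx\Bigr],$$
and Gronwall's inequality then yields the potential-energy decay $V(u(t))=\frac14\int|x|^{b_1}|u|^4\,dx+\frac16\int|x|^{b_2}|u|^6\,dx\lesssim t^{b_1-3}$. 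With that decay in hand, Cauchy--Schwarz against $\|u-u_+\|_{L^2}=o(1)$ and the weighted dispersive bound $\||x|^\theta u_+\|_{L^\infty}\lesssim t^{-(3/2-\theta)}$ of \eqref{t-decay} give the needed $o(t^{b_1-3})$ bounds on the mixed cubic term and on the quintic term; the hypothesis $b_2<3+b_1$ enters exactly there. Without this (or an equivalent) decay estimate for the weighted potential energy of $u$, your absorption step $\int_0^T\rho\,dt\le \mathrm{Im}\,J(0)+\int_0^T\varepsilon(t)\rho(t)\,dt$ cannot be justified, so the proof is incomplete as written.
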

For the proof we use pseudo-conformal identity to get the potential energy bound $\frac14\int|x|^{b_1}|u|^4\,dx + \frac16\int|x|^{b_2}|u|^6\,dx \lesssim t^{b_1-3}$, which is crucial to the estimate of quintic term. Theorem \ref{nonscattering} implies that the scattering in the sense of Definition \ref{scattering defn} does not occur in the long-range case $b_1 \ge 2$. We think the case $b_1 = 2$ will be borderline of the scattering and non-scattering. In this critical case it is highly expected that a modified scattering will occur. This will be another interesting issue to be pursued. The scattering problem still remains open in short-range cases $\frac{2}3 \le b_1 < 2$. This short range together with critical case may be taken into account by utilizing the generator of Galilean transformation $\mathbf J$ (see \eqref{j} below).

This paper is organized as follows: In Section 2 we introduce angular Sobolev inequality and some properties of angular momentum operators. We give a proof for Theorem \ref{lwp} in Section 3 by standard contraction argument and for Theorem \ref{blowup} in Section 4 via virial argument. In Sections 5, 6 we prove small data scattering, Theorem \ref{scattering} and non-scattering, Theorem \ref{nonscattering}.

\medskip

\noindent\textbf{Basic notations.}\\

\noindent$\bullet$ Fractional derivatives: $D^s = (-\Delta)^\frac{s}2 = \mathcal F^{-1}|\xi|^s\mathcal F$, $\Lambda^s = (1- \Delta)^\frac{s}2 = \mathcal F^{-1}(1+|\xi|^2)^\frac s2\mathcal F$ for $s > 0$.\\

\noindent$\bullet$ Function spaces:  $\dot H_r^s = D^{-s}L^r$, $\dot H^s = \dot H_2^s$, $H_r^s = \Lambda^{-s} L^r$, $H^s = H_2^s$, $L^r = L_x^r(\mathbb R^d)$ for $s \in \mathbb R$ and $1 \le r \le \infty$.  \\

\noindent$\bullet$ Mixed-normed spaces: For a Banach space $X$, $u \in L_I^q X$ iff $u(t) \in X$ for a.e. $t \in I$ and $\|u\|_{L_I^qX} := \|\|u(t)\|_X\|_{L_I^q} < \infty$. Especially, we denote  $L_I^qL_x^r = L_t^q(I; L_x^r(\mathbb R^d))$, $L_{I, x}^q = L_I^qL_x^q$ and $L_t^qL_x^r = L_{\mathbb R}^qL_x^r$.\\




\noindent$\bullet$ As usual different positive
constants depending are denoted by the same letter $C$, if not specified. $A \lesssim B$ and $A \gtrsim B$ means that $A \le CB$ and
$A \ge C^{-1}B$, respectively for some $C>0$. $A \sim B$ means that $A \lesssim B$ and $A \gtrsim B$.\\

\section{Useful lemmata}

If a pair $(q, r)$ satisfies that $2 \le q, r \le \infty$, $\frac 2{q} + \frac 3{r} = \frac 32$, then it is said to be {\it admissible}.

\begin{lem}[\cite{kt}]\label{str}
Let $(q, r)$ and $(\widetilde q, \widetilde r)$ be any admissible pair.
Then we have
\begin{align*}
\|e^{it{\Delta}} \varphi\|_{L_t^{q} L_x^r} &\lesssim \| \varphi\|_{L_x^2},\\
\|\int_0^t e^{i(t-t')\Delta}F\,dt'\|_{L_t^qL_x^r} &\lesssim \|F\|_{L_t^{\widetilde q'}L_x^{\widetilde r'}}.
\end{align*}
\end{lem}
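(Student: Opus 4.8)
The plan is to prove both inequalities by the classical $TT^*$ method, reducing everything to the pointwise dispersive bound for the free propagator together with the one–dimensional Hardy--Littlewood--Sobolev inequality in the time variable, and then to recover the endpoint pair separately by the bilinear argument of Keel and Tao.

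First I would record the two inputs about $e^{it\Delta}$ in $d=3$ that the rest of the argument uses: the mass identity $\|e^{it\Delta}f\|_{L_x^2}=\|f\|_{L_x^2}$, and the dispersive estimate $\|e^{it\Delta}f\|_{L_x^\infty}\lesssim |t|^{-3/2}\|f\|_{L_x^1}$, the latter being immediate from the explicit convolution kernel $(4\pi i t)^{-3/2}e^{i|x|^2/4t}$. Interpolating these two endpoints by Riesz--Thorin gives the whole family $\|e^{it\Delta}f\|_{L_x^r}\lesssim |t|^{-3(\frac12-\frac1r)}\|f\|_{L_x^{r'}}$ for $2\le r\le\infty$, which is the only equation-specific fact needed below.

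Next I would set $T\varphi:=e^{it\Delta}\varphi$ and note that the homogeneous estimate $\|T\varphi\|_{L_t^qL_x^r}\lesssim\|\varphi\|_{L_x^2}$ is, by duality, equivalent to boundedness of $TT^*F=\int_{\R}e^{i(t-s)\Delta}F(s)\,ds$ from $L_t^{q'}L_x^{r'}$ into $L_t^qL_x^r$. Inserting the dispersive bound into the kernel of $TT^*$ reduces this to controlling $\int_{\R}|t-s|^{-3(\frac12-\frac1r)}\|F(s)\|_{L_x^{r'}}\,ds$ in $L_t^q$. Admissibility forces the decay exponent $3(\frac12-\frac1r)=\frac2q$, so this is exactly fractional integration of order $1-\frac2q$ on $\R$, and Hardy--Littlewood--Sobolev closes it precisely when $0<\frac2q<1$, i.e. for $2<q<\infty$. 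The same bilinear pairing, now with $(q,r)$ on one slot and $(\widetilde q,\widetilde r)$ on the other, yields the full-line inhomogeneous estimate for all non-endpoint admissible pairs; the causal (retarded) integral $\int_0^t$ actually required in Duhamel's formula is then extracted from the full-line bound by the Christ--Kiselev lemma, which applies as long as $q>\widetilde q{}'$, that is, away from the endpoint.

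The hard part is the endpoint, which in $d=3$ is $(q,r)=(2,6)$. Here $q=2$ makes the time fractional integration of order $0$, the forbidden $L^1$-endpoint of Hardy--Littlewood--Sobolev, and, worse, the Christ--Kiselev lemma is unavailable for the retarded estimate at $q=\widetilde q{}'$. Following Keel--Tao, I would instead decompose the bilinear form $\langle\int e^{i(t-s)\Delta}F(s)\,ds,\,G(t)\rangle$ dyadically according to the scale $|t-s|\sim 2^j$, estimate each piece by interpolating the mass and dispersive bounds, and sum the resulting series by a real-interpolation / atomic decomposition argument that exploits the strict inequality $r<\infty$ to gain summability in $j$. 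This bilinear scheme delivers the endpoint homogeneous estimate and the endpoint retarded inhomogeneous estimate at once, and all remaining mixed pairs $(q,r)\neq(\widetilde q,\widetilde r)$ follow by interpolating between the endpoint and the non-endpoint cases. Since $d=3\ge 3$ the endpoint is admissible and no logarithmic loss appears, so every stated inequality holds with a constant independent of $\varphi$, $F$, and the underlying time interval.
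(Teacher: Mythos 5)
Your outline is correct and is essentially the proof the paper relies on: the paper does not prove this lemma but cites Keel--Tao \cite{kt}, and your argument --- the $TT^*$ reduction of the dispersive bound $|t|^{-3/2}$ via Hardy--Littlewood--Sobolev for $2<q<\infty$, Christ--Kiselev for the non-endpoint retarded estimates, and the dyadic bilinear/real-interpolation scheme for the endpoint $(2,6)$ --- is precisely the scheme of that reference. All the exponent bookkeeping (in particular $3(\tfrac12-\tfrac1r)=\tfrac2q$ and the condition $q>\widetilde q\,'$) checks out, so nothing further is needed.
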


\begin{lem}\label{hs-ineq}
For any $f \in \dot H_p^s(\mathbb R^3) (1 < p < \infty, 0 < s < \frac{3}p)$ we have
$$
\||x|^{-s}f\|_{L_x^p} \lesssim \|f\|_{ \dot H_p^s}.
$$
\end{lem}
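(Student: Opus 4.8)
The plan is to recognize Lemma \ref{hs-ineq} as a weighted (Stein--Weiss type) bound for the Riesz potential and to prove it by a Schur test with a power weight. First I would reduce to the Riesz potential: since $\dot H_p^s = D^{-s}L^p$, writing $g = D^s f \in L^p$ gives $f = D^{-s}g = c_s I_s g$, where $I_s g(x) = \int_{\R^3} |x-y|^{-(3-s)} g(y)\,dy$ and $\|f\|_{\dot H_p^s} = \|g\|_{L^p}$. Hence the claim is equivalent to the operator bound $\||x|^{-s}I_s g\|_{L^p} \lesssim \|g\|_{L^p}$. Because $|I_s g| \le I_s|g|$ pointwise, it suffices to treat nonnegative $g$, i.e.\ to show that the integral operator $T$ with kernel $K(x,y) = |x|^{-s}|x-y|^{-(3-s)} \ge 0$ is bounded on $L^p(\R^3)$.

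The key structural observation is that $K$ is homogeneous of degree $-3$, namely $K(tx,ty) = t^{-3}K(x,y)$, which is the correct scaling for an $L^p$-bounded kernel on $\R^3$. To exploit this I would run the Schur test with the test weight $h(x) = |x|^{-a}$ for a parameter $a$ to be chosen, which requires the two estimates
$$\int_{\R^3} K(x,y)\,|y|^{-ap'}\,dy \lesssim |x|^{-ap'}, \qquad \int_{\R^3} K(x,y)\,|x|^{-ap}\,dx \lesssim |y|^{-ap},$$
with $p'$ the conjugate exponent. Each is a single integral of a product of homogeneous factors; by separately checking the behaviour near $0$, near the singularity $x=y$, and at infinity and then reading off the homogeneity, the first integral equals $C|x|^{-ap'}$ provided $s/p' < a < 3/p'$, and the second equals $C|y|^{-ap}$ provided $0 < a < (3-s)/p$. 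Granting both, the Schur test yields $\|T\|_{L^p\to L^p} \lesssim 1$ and the lemma follows.

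The step I expect to be the crux is verifying that the two windows for $a$ overlap, and that this overlap is governed exactly by the hypothesis $0 < s < 3/p$. Indeed the admissible range is $s/p' < a < \min(3/p',\,(3-s)/p)$, which is nonempty iff $s/p' < (3-s)/p$; clearing denominators this reads $s(p-1) < 3-s$, i.e.\ $sp < 3$, i.e.\ $s < 3/p$. Thus the restriction on $s$ is not an artifact but is precisely what makes the Schur weight available, and one chooses any $a$ in that interval. The remaining work is routine: the convergence conditions for the two kernel integrals ($s < ap' < 3$ for the first; $0 < a$ and $s+ap < 3$ for the second) are all consequences of the chosen range of $a$, and a Fubini/density argument passes the estimate from Schwartz $g$ to all of $L^p$. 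Alternatively one may simply invoke the Stein--Weiss inequality with parameters $\alpha = \beta = s$, equal integrability exponents and no weight on $g$, whose admissibility condition again collapses to $s < 3/p$.
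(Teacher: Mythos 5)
Your proof is correct, but it takes a genuinely different route from the paper. The paper disposes of the lemma in one line by citing two endpoint results --- the fractional Hardy inequality of Maz'ya--Shaposhnikova (Theorem 2 of \cite{ms}) and the critical embedding $\|f\|_{BMO} \lesssim \|f\|_{\dot H_p^{3/p}}$ --- and interpolating between them; it is a proof by reference rather than a computation. You instead reduce to the operator bound $\||x|^{-s} I_s g\|_{L^p} \lesssim \|g\|_{L^p}$ for the Riesz potential and run a Schur test with the power weight $|x|^{-a}$, verifying that the two admissibility windows for $a$ overlap exactly when $s < 3/p$. Your computation is sound: the kernel $|x|^{-s}|x-y|^{-(3-s)}$ has the right homogeneity $-3$, the convergence conditions ($s < ap' < 3$ and $0 < a$, $s+ap<3$) are correctly identified, and the overlap condition $s/p' < (3-s)/p$ does collapse to $sp<3$ as you claim. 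What your approach buys is a self-contained, elementary proof that makes transparent why the hypothesis $0<s<3/p$ is sharp for this scaling; what it costs is the (standard but worth stating) justification that an element of $\dot H_p^s$ with $0<s<3/p$ is genuinely represented as $c_s I_s(D^s f)$ --- the identity $f = I_s D^s f$ holds only modulo polynomials in general, and one should note that the realization of $\dot H_p^s$ in this range makes it an honest equality. The paper's citation route avoids this bookkeeping at the price of opacity. Either argument is acceptable; yours is arguably the more instructive one to include.
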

\begin{proof}
This can be done by interpolation between Theorem 2 of \cite{ms} and critical Sobolev inequality $\|f\|_{BMO} \lesssim \|f\|_{\dot H^\frac np}$ ( For instance see  \cite{tr}).
\end{proof}

\begin{lem}[\cite{crt-n}]\label{sobo-sp}
For any smooth function $f$ there holds
$$
\|\mathbf Lf\|_{L_x^2} \sim \||\mathbf L|f\|_{L_x^2} = \|(-\Delta_{S^2})^\frac12 f\|_{L_x^2},\quad \sum_{1 \le j, k \le 3}\|L_j L_k f\|_{L_x^2}^2 \sim \||\mathbf L|^2f\|_{L_x^2}^2.
$$
\end{lem}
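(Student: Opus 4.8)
The plan is to reduce both statements to the sphere and then exploit the spectral decomposition of $-\Delta_{S^2}$. Since each component $L_j$ of $\mathbf L$ differentiates only in the angular variables and commutes with multiplication by radial functions, I would write $\|\,\cdot\,\|_{L_x^2}^2 = \int_0^\infty \|\,\cdot\,\|_{L^2(S^2)}^2\, r^2\, dr$ and prove both equivalences fiberwise on $L^2(S^2)$, then integrate in $r$. On $L^2(S^2)$ I use the orthogonal decomposition into spherical harmonics $L^2(S^2) = \bigoplus_{n \ge 0}\mathcal H_n$, where $\mathcal H_n$ is the eigenspace of $-\Delta_{S^2} = |\mathbf L|^2$ with eigenvalue $n(n+1)$.

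The first equivalence is immediate from self-adjointness: each $L_j$ is formally self-adjoint on $L^2$, being the generator of a one-parameter group of rotations, so integration by parts gives $\sum_j \|L_j f\|_{L^2}^2 = \langle \sum_j L_j^2 f, f\rangle = \langle -\Delta_{S^2} f, f\rangle = \|(-\Delta_{S^2})^{1/2} f\|_{L^2}^2 = \||\mathbf L| f\|_{L^2}^2$. If $\|\mathbf L f\|_{L^2}$ denotes the $L^2$ norm of the vector $(L_1f, L_2f, L_3f)$, this is in fact an equality; for any other fixed norm on $\mathbb R^3$ it is an equivalence.

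For the second equivalence the easy direction $\||\mathbf L|^2 f\|^2 \lesssim \sum_{j,k}\|L_jL_kf\|^2$ follows from $|\mathbf L|^2 = \sum_k L_k^2$ and Cauchy--Schwarz: $\|\sum_k L_k^2 f\|^2 \le 3\sum_k\|L_k^2 f\|^2 \le 3\sum_{j,k}\|L_jL_kf\|^2$. The reverse bound is the crux. The key structural fact is $[L_j, |\mathbf L|^2] = 0$ for every $j$, which follows from the commutation relations $[L_j, L_k] = i\epsilon_{jkl}L_l$ together with the antisymmetry of $\epsilon_{jkl}$; consequently each $L_j$, and hence each product $L_jL_k$, preserves every eigenspace $\mathcal H_n$. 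On $\mathcal H_n$ one has $\sum_k L_k^2 = n(n+1)\,\mathrm{Id}$, and since each $L_k$ is self-adjoint, $\|L_k g\|^2 = \langle L_k^2 g, g\rangle \le \langle |\mathbf L|^2 g, g\rangle = n(n+1)\|g\|^2$ for $g \in \mathcal H_n$; that is, $L_k$ maps $\mathcal H_n$ into itself with operator norm at most $\sqrt{n(n+1)}$. Applying this twice (legitimate because $L_k g \in \mathcal H_n$) gives $\|L_jL_k g\| \le n(n+1)\|g\| = \||\mathbf L|^2 g\|$ on $\mathcal H_n$, whence $\sum_{j,k}\|L_jL_k g\|^2 \le 9\,\||\mathbf L|^2 g\|^2$. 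Decomposing a general $f = \sum_n f_n$ and using that all operators respect the orthogonal splitting yields $\sum_{j,k}\|L_jL_kf\|^2 = \sum_n\sum_{j,k}\|L_jL_kf_n\|^2 \le 9\sum_n\||\mathbf L|^2 f_n\|^2 = 9\||\mathbf L|^2 f\|^2$, and integrating in $r$ finishes the argument.

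I expect the main obstacle to be the off-diagonal products $L_jL_k$ with $j \ne k$, which do not appear in $|\mathbf L|^2 = \sum_k L_k^2$ and are therefore not controlled by any naive integration by parts. The resolution is precisely the observation that each $L_j$ commutes with the Casimir $|\mathbf L|^2$ and so cannot change the spherical-harmonic degree, confining $L_jL_k$ to the fixed eigenspace $\mathcal H_n$ on which both $L_j$ and $|\mathbf L|^2$ act with comparable size $n(n+1)$. A secondary technical point worth checking is that the spectral manipulations and integrations by parts are justified for the relevant class of $f$ (e.g. Schwartz functions, then density), which is routine.
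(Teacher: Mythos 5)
Your proof is correct. The reduction $L^2(\mathbb R^3)=L^2(r^2dr;L^2(S^2))$ is legitimate because each $L_j$ acts only in the angular variables; the first claim is indeed an identity for the Euclidean vector norm, by self-adjointness of each $L_j$; and in the second claim both directions check out: the easy one by Cauchy--Schwarz with constant $3$, and the hard one because $[L_j,|\mathbf L|^2]=0$ forces $L_jL_k$ to preserve each $\mathcal H_n$, on which $\|L_jL_kg\|\le n(n+1)\|g\|=\||\mathbf L|^2g\|$ (the case $n=0$ being trivial), giving constant $9$ after summing over the nine pairs and using orthogonality across degrees. The only point worth making explicit is the justification of the termwise manipulations in the spherical-harmonic expansion, which for smooth $f$ follows from rapid decay of the coefficients on each sphere; no spatial decay of $f$ is needed since the fiberwise inequalities are integrated in $r$ regardless of whether the totals are finite.

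Be aware, though, that the paper itself gives no proof of this lemma: it is quoted from Coulhon--Russ--Tardivel-Nachef, where equivalences of this type are established in the far more general setting of Sobolev algebras on Lie groups and Riemannian manifolds, by semigroup and Riesz-transform machinery rather than by explicit diagonalization. Your argument is a genuinely different, and in this specific situation preferable, route: it is elementary, self-contained, and yields explicit constants, exploiting two features special to the $SO(3)$ action on $\mathbb R^3$ --- the Casimir $|\mathbf L|^2$ commutes with each generator, and its spectral decomposition (spherical harmonics) is completely explicit. The trade-off is scope: your spectral/self-adjointness trick is tied to $L^2$ and to this commuting family, whereas the cited framework covers $L^p$ norms, higher-order derivatives, and general groups or manifolds, which is presumably why the author preferred to cite it rather than reprove the special case.
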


\begin{lem}[\cite{choz1, fawa}]\label{ang-sobo}
Let $0 < b < 1$. Then for any $f \in H_{\mathbf L}^{1,\; 1}$ there holds
$$
\||x|^bf\|_{L_x^\infty} \lesssim \|f\|_{H_{\mathbf L}^{1,\; 1}}.
$$
And also for any $f \in \hl^{1, 2}$
$$
\||x|f\|_{L_x^\infty} \lesssim \|f\|_{\hl^{1, \;2}}.
$$
\end{lem}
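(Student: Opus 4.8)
The plan is to pass to polar coordinates $x=\rho\omega$ ($\rho=|x|$, $\omega\in S^2$), write $g_\rho(\cdot)=f(\rho\,\cdot)$ for the restriction of $f$ to the sphere of radius $\rho$, and reduce both inequalities to controlling $\sup_{\rho>0}\rho^{b}\|g_\rho\|_{L^\infty(S^2)}$. Two structural facts will be used throughout. First, because $\mathbf L=x\times(-i\nabla)$ annihilates the radial part of the gradient, one has the pointwise identity $|\mathbf Lf|=|\nabla_{S^2}f|$, so $\mathbf L$ is (up to a pointwise rotation) the intrinsic gradient on each sphere and $|\mathbf L|^2=-\Delta_{S^2}$ acts only in $\omega$; by Lemma \ref{sobo-sp} the full second-order angular derivatives are comparable to $|\mathbf L|^2$, whence $\|g_\rho\|_{H^2(S^2)}\sim\|g_\rho\|_{L^2(S^2)}+\||\mathbf L|^2g_\rho\|_{L^2(S^2)}$. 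Second, $\mathbf L$ commutes with $\partial_\rho$ and with $\Delta$, so radial estimates apply equally to $|\mathbf L|^kf$ and to $f$.

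The engine on the radial side is a one-dimensional Strauss-type bound: applying the fundamental theorem of calculus to $s\mapsto s^2|h(s\omega)|^2$, using $\rho\le s$ on the domain of integration, and Cauchy--Schwarz first in $s$ and then in $\omega$, I obtain
$$\rho\,\|h(\rho\,\cdot)\|_{L^2(S^2)}\lesssim\|h\|_{L_x^2}^{1/2}\|\partial_\rho h\|_{L_x^2}^{1/2}\lesssim\|h\|_{H^1}$$
uniformly in $\rho$. For the endpoint $b=1$ I then argue sphere by sphere: since $\dim S^2=2$ we have $H^2(S^2)\hookrightarrow L^\infty(S^2)$, so that
$$\rho\,\|g_\rho\|_{L^\infty(S^2)}\lesssim\rho\,\|g_\rho\|_{L^2(S^2)}+\rho\,\||\mathbf L|^2g_\rho\|_{L^2(S^2)}.$$
Applying the radial estimate to $h=f$ and to $h=|\mathbf L|^2f$ (legitimate because $|\mathbf L|^2$ commutes with $\partial_\rho$, and $\||\mathbf L|^2f\|_{H^1}<\infty$ is exactly the extra ingredient carried by $\hl^{1,2}$) bounds the right-hand side by $\|f\|_{H^1}+\||\mathbf L|^2f\|_{H^1}\le\|f\|_{\hl^{1,2}}$. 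Taking the supremum in $\rho$ yields $\||x|f\|_{L_x^\infty}\lesssim\|f\|_{\hl^{1,2}}$.

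For $0<b<1$ only one angular derivative is available with the correct radial weight: in $\hl^{1,1}$ the second angular derivative enters only through the angular part of $\nabla\mathbf Lf$, which carries the weight $\rho^{-1}$, so $\hl^{1,1}$ furnishes $\int_0^\infty\||\mathbf L|^2g_\rho\|_{L^2(S^2)}^2\,d\rho<\infty$ rather than the weighted $L_x^2$ bound exploited in the $b=1$ case. The plan is therefore to interpolate: estimate $\|g_\rho\|_{L^\infty(S^2)}$ by a fractional angular norm $\|g_\rho\|_{H^{1+\delta}(S^2)}$ (still $\hookrightarrow L^\infty(S^2)$ for $\delta>0$), obtained by interpolating $\mathbf Lg_\rho$ against $|\mathbf L|^2g_\rho$, and then run a \emph{weighted} radial Strauss estimate with exponent $b$ in place of $1$, trading the surplus $1-b$ of radial decay for the fractional angular loss $\delta$. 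The main obstacle is precisely this weighted radial step near the origin: the identity $\rho^{2b}P(\rho)^2=-\int_\rho^\infty\partial_s\!\big(s^{2b}P(s)^2\big)\,ds$, with $P(\rho)$ one of the angular norms of $g_\rho$, produces the term $2b\int_0^\infty s^{2b-1}P(s)^2\,ds$, whose convergence at $s=0$ is not automatic and must be absorbed by a Hardy-type inequality. This is exactly where the positivity of $b$ is essential — the weight $|x|^b$ is what tames the admissible radial singularity of $H^1$ functions at the origin — and obtaining pointwise-in-$\rho$ control of the top-order angular norm from the unweighted $L^2_\rho$ information is the delicate point that makes the range $0<b<1$ genuinely harder than the clean endpoint $b=1$.
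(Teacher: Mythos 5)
The paper itself offers no proof of this lemma --- it is quoted from \cite{choz1, fawa} --- so your argument has to stand on its own. Your treatment of the \emph{second} inequality does: the identity $|\mathbf L f| = |\nabla_{S^2} f|$, the commutation of $|\mathbf L|^2$ with $\partial_\rho$, the radial Strauss bound $\rho\,\|h(\rho\,\cdot)\|_{L^2(S^2)} \lesssim \|h\|_{L^2}^{1/2}\|\partial_\rho h\|_{L^2}^{1/2}$ applied to $h = f$ and $h = |\mathbf L|^2 f$, and the embedding $H^2(S^2) \hookrightarrow L^\infty(S^2)$ with $\|g\|_{H^2(S^2)} \sim \|g\|_{L^2(S^2)} + \||\mathbf L|^2 g\|_{L^2(S^2)}$ fit together exactly as you describe and give $\||x| f\|_{L_x^\infty} \lesssim \|f\|_{H_{\mathbf L}^{1,\,2}}$.

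The first inequality, however, is not proved: you explicitly defer the weighted radial step as ``the delicate point,'' and that gap cannot be closed in the stated range. Your own computation shows why. Differentiating $s^{2b}P(s)^2$, with $P(\rho)$ an angular $L^2(S^2)$ norm of $f(\rho\,\cdot)$, produces the term $2b\int_0^\infty s^{2b-1}P(s)^2\,ds = 2b\,\||x|^{b-\frac32} f\|_{L_x^2}^2$, and the Hardy inequality $\||x|^{-\sigma} f\|_{L_x^2} \lesssim \|f\|_{\dot H^\sigma}$ with $\sigma = \frac32 - b$ is available from $H^1$ data only when $\frac32 - b \le 1$, i.e.\ $b \ge \frac12$. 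This is not a defect of the method but of the statement: for any $b < \alpha < \frac12$ the radial function $f(x) = |x|^{-\alpha}\eta(|x|)$, with $\eta$ a smooth cutoff near the origin, belongs to $H^1(\mathbb R^3)$ and satisfies $\mathbf L f = 0$, so $\|f\|_{H_{\mathbf L}^{1,\,1}} = 2\|f\|_{L^2} + \|f\|_{H^1} < \infty$ while $\||x|^b f\|_{L_x^\infty} = \sup_{0<r\le \frac12} r^{\,b-\alpha} = \infty$. Hence the first inequality fails for $0 < b < \frac12$ and no completion of your outline (or any other argument) can rescue it there; the results actually proved in \cite{choz1, fawa} carry a weight exponent $\frac{d}{2} - s$ tied to the available $\dot H^s$ regularity, which with $H^1$ data forces $b \ge \frac12$. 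You should also flag a second soft spot in the remaining range $\frac12 \le b < 1$: the $H_{\mathbf L}^{1,\,1}$ norm controls $\||\mathbf L|^2 f(\rho\,\cdot)\|_{L^2(S^2)}$ only in $L^2(d\rho)$, not pointwise in $\rho$, so the sphere-by-sphere embedding $H^{1+\delta}(S^2) \hookrightarrow L^\infty(S^2)$ you propose to interpolate toward cannot be fed by a pointwise-in-$\rho$ bound; the references instead decompose into spherical harmonics and apply the radial estimate to each component before summing.
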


\begin{lem}\label{ang-decay}
Let $0 < b < 1$, $0 <  \varepsilon < 1 - b$, and $2 \le p < \infty$. Then for any $f \in H_{\mathbf L}^{1,\; 1} \cap L_x^p$ we have
$$
\||x|^b f\|_{L_x^\frac{p}\varepsilon} \lesssim \|f\|_{H_{\mathbf L}^{1,\; 1}}^{1-\varepsilon}\|f\|_{L_x^p}^\varepsilon.
$$
\end{lem}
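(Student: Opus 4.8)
The plan is to deduce this weighted decay estimate from the weighted $L_x^\infty$ bound of Lemma \ref{ang-sobo} by a pointwise H\"older splitting, interpolating one $L_x^\infty$ factor against the given $L_x^p$ norm. The only structural input is Lemma \ref{ang-sobo}; everything else is elementary once the exponents are arranged so that the weight power appearing in the $L_x^\infty$ factor stays below $1$ and is therefore admissible for the angular Sobolev inequality.

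First I would write the left-hand side as
\begin{equation*}
\||x|^b f\|_{L_x^{p/\varepsilon}}^{p/\varepsilon} = \int_{\mathbb R^3} |x|^{bp/\varepsilon}\,|f|^{p/\varepsilon}\,dx,
\end{equation*}
and then split the integrand by peeling off one copy of $|f|^p$:
\begin{equation*}
|x|^{bp/\varepsilon}|f|^{p/\varepsilon} = \Big(|x|^{b/(1-\varepsilon)}|f|\Big)^{p(1-\varepsilon)/\varepsilon}\,|f|^p.
\end{equation*}
Estimating the first factor in $L_x^\infty$ and the second in $L_x^1$ (that is, applying H\"older with exponents $\infty$ and $1$) gives
\begin{equation*}
\||x|^b f\|_{L_x^{p/\varepsilon}}^{p/\varepsilon} \le \big\||x|^{b/(1-\varepsilon)} f\big\|_{L_x^\infty}^{p(1-\varepsilon)/\varepsilon}\,\|f\|_{L_x^p}^p.
\end{equation*}

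The hypothesis $\varepsilon < 1-b$ is used precisely here, in controlling the $L_x^\infty$ factor: the weight exponent $b/(1-\varepsilon)$ lies in $(0,1)$ exactly when $\varepsilon < 1-b$, so Lemma \ref{ang-sobo} applied with $b' = b/(1-\varepsilon)$ yields $\||x|^{b/(1-\varepsilon)} f\|_{L_x^\infty} \lesssim \|f\|_{\hl^{1,1}}$. Substituting this bound and raising the resulting inequality to the power $\varepsilon/p$ produces exactly
\begin{equation*}
\||x|^b f\|_{L_x^{p/\varepsilon}} \lesssim \|f\|_{\hl^{1,1}}^{1-\varepsilon}\,\|f\|_{L_x^p}^\varepsilon,
\end{equation*}
which is the claim.

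I do not expect any genuine obstacle: the substantive content is entirely carried by Lemma \ref{ang-sobo}, and the only thing to get right is the algebraic choice of the split, namely attaching the weight to the $L_x^\infty$ factor with exponent $b/(1-\varepsilon)$ so that the constraint on $\varepsilon$ is exactly what makes that factor admissible for the angular Sobolev inequality. The one routine point to record is that $f \in \hl^{1,1}\cap L_x^p$ makes both right-hand factors finite, so that the H\"older splitting is justified.
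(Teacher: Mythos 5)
Your proposal is correct and is essentially identical to the paper's own proof: the pointwise splitting $|x|^{bp/\varepsilon}|f|^{p/\varepsilon} = \big(|x|^{b/(1-\varepsilon)}|f|\big)^{p(1-\varepsilon)/\varepsilon}|f|^p$ followed by H\"older is exactly the interpolation inequality $\||x|^b f\|_{L_x^{p/\varepsilon}} \le \||x|^{b/(1-\varepsilon)} f\|_{L_x^\infty}^{1-\varepsilon}\|f\|_{L_x^p}^\varepsilon$ that the paper writes in one line before invoking Lemma \ref{ang-sobo}. You have simply made explicit the elementary step the paper leaves implicit.
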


\begin{proof}
Since $\varepsilon < 1 - b$, $\frac{b}{1-\varepsilon} < 1$ and thus we get from Lemma \ref{ang-sobo} that
$$
\||x|^b f\|_{L_x^\frac p\varepsilon} \le \||x|^\frac{b}{1-\varepsilon} f\|_{L_x^\infty}^{1-\varepsilon}\|f\|_{L_x^p}^\varepsilon \le \|f\|_{H_{\mathbf L}^{1, 1}}^{1-\varepsilon}\|f\|_{L_x^p}^\varepsilon.
$$
\end{proof}

By direct calculation we have the following.
\begin{lem}\label{conv-radial}
$(1)$ Let $s \ge 0$. Then $\mathbf L D^s f = D^s \mathbf L f$ and $\mathbf \Lambda^s f = \Lambda^s \mathbf L f$ for any smooth function $f$.\\
$(2)$ Let $\psi$ be smooth and radially symmetric. Then $$\mathbf L (\psi * f) = \psi * (\mathbf L f).$$
\end{lem}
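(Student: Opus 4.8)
The three operators appearing in the statement share one structural feature: each is a Fourier multiplier whose symbol is radial, while $\mathbf L = x\times(-i\nabla)$ is the infinitesimal generator of rotations on $\mathbb R^3$. My plan is to exploit the general principle that any operator commuting with rotations also commutes with $\mathbf L$, implemented on the Fourier side. Indeed $D^s$ and $\Lambda^s$ have the radial symbols $|\xi|^s$ and $(1+|\xi|^2)^{s/2}$, and in $(2)$ the convolution operator $\psi * \,\cdot\,$ has symbol $\widehat\psi$, which is again radial since the Fourier transform of a radial function is radial. So all three cases reduce to a single assertion: multiplication by a radial symbol commutes with the frequency-side avatar of $\mathbf L$.

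First I would establish the conjugation identity $\mathcal F(\mathbf L f) = \big(\xi\times(-i\nabla_\xi)\big)\mathcal F f$ for smooth rapidly decaying $f$. Writing $L_k = -i\,\epsilon_{kjl}x_j\partial_l$ and using $\widehat{x_j g} = i\partial_{\xi_j}\widehat g$ together with $\widehat{\partial_l f} = i\xi_l\widehat f$, a short computation gives $\widehat{L_k f} = i\,\epsilon_{kjl}\,\xi_l\,\partial_{\xi_j}\widehat f$, where the term proportional to $\epsilon_{kjl}\delta_{jl}$ drops out because $\epsilon_{kjl}$ is antisymmetric in $j,l$; relabelling the summation indices identifies the right-hand side with $\big(\xi\times(-i\nabla_\xi)\big)_k\widehat f$. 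Thus, setting $\mathbf L_\xi := \xi\times(-i\nabla_\xi)$, conjugation by $\mathcal F$ carries $\mathbf L$ to $\mathbf L_\xi$. The second ingredient is that $\mathbf L_\xi$ is a first-order derivation tangent to the spheres $\{|\xi| = \text{const}\}$: for a radial symbol $m(\xi) = \mu(|\xi|)$ one has $\xi\times\nabla_\xi m = (\mu'(|\xi|)/|\xi|)(\xi\times\xi) = 0$, hence $\mathbf L_\xi m = 0$, and the Leibniz rule then yields $\mathbf L_\xi(m\,\widehat g) = m\,\mathbf L_\xi\widehat g$.

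Combining the two ingredients proves everything at once. For $(1)$, $\mathbf L D^s f = \mathcal F^{-1}\mathbf L_\xi\big(|\xi|^s\widehat f\big) = \mathcal F^{-1}\big(|\xi|^s\,\mathbf L_\xi\widehat f\big) = \mathcal F^{-1}\big(|\xi|^s\,\widehat{\mathbf L f}\big) = D^s\mathbf L f$, and identically for $\Lambda^s$ with the radial symbol $(1+|\xi|^2)^{s/2}$. For $(2)$, the same chain with $m = \widehat\psi$ gives $\mathbf L(\psi * f) = \mathcal F^{-1}\big(\widehat\psi\,\widehat{\mathbf L f}\big) = \psi * (\mathbf L f)$. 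Alternatively, $(2)$ admits a purely physical-space proof: since $\psi$ is radial one checks the pointwise identity $x\times\nabla_x\psi(x-y) = -\,y\times\nabla_y\psi(x-y)$, after which a single integration by parts in $y$ (again discarding an $\epsilon_{kjl}\delta_{jl}$ term) converts $\mathbf L$ acting on the $x$-variable of the convolution into $\mathbf L$ acting on $f$. There is no genuine obstacle here: the content is bookkeeping with the cross product and the antisymmetry of $\epsilon_{kjl}$, and the only point deserving a word of care is the standing smoothness and decay hypothesis, under which the differentiation under the integral sign, the integration by parts, and the action of the multipliers are all legitimate; the identities then extend to the relevant Sobolev classes by density.
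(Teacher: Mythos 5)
Your proof is correct. The paper offers no argument at all for this lemma (it is prefaced only by ``By direct calculation we have the following''), and your Fourier-side computation --- conjugating $\mathbf L$ to $\xi\times(-i\nabla_\xi)$ and observing that this angular vector field annihilates the radial symbols $|\xi|^s$, $(1+|\xi|^2)^{s/2}$ and $\widehat\psi$ --- is precisely the direct calculation being alluded to, with the physical-space integration by parts for part $(2)$ an equally valid variant.
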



\newcommand{\xl}{X_{\mathbf L}}
\newcommand{\xloz}{X_{\mathbf L}^{1,\; 0}}
\newcommand{\xloo}{X_{\mathbf L}^{1,\; 1}}
\newcommand{\ist}{I_T}
\newcommand{\xlot}{X_{\mathbf L}^{1,\; 2}}
\newcommand{\xltt}{X_{\mathbf L}^{2,\; 2}}
\newcommand{\xlto}{X_{\mathbf L}^{2,\; 1}}

\section{Local well-posedness: Proof of Theorem \ref{lwp}}\label{sec4}
Let $I_T = [-T, T]$. Let us define a complete metric spaces $\xl^{n,\; \ell}(T, \rho), n, \ell = 0, 1, 2$ with metric $d^{n, \;\ell}$  by
\begin{align*}
\xl^{1,\; 0}(T, \rho) &:= \Big\{u \in S_T^{1,\; 0} = L_{\ist}^{10}H_\frac{30}{13}^1 \cap (C \cap L^\infty)(I_T;  H^1) : \|u\|_{S_T^{1,\; 0}} \le \rho \Big\}, \;\;d^{1, \; 0}(u ,v) = \|u - v\|_{S_T^{1,\; 0}},\\
\xl^{n,\; \ell}(T, \rho) &:= \Big\{u \in S_T^{n,\; \ell} = L_{\ist}^{10}H_{\mathbf L,\; \frac{30}{13}}^{n,\;\ell} \cap (C \cap L^\infty)(I_T;  H_{\mathbf L}^{n, \;\ell}) : \|u\|_{S_T^{n, \ell}} \le \rho \Big\}, \;\;d^{n,\; \ell}(u ,v) = \|u - v\|_{S_T^{n,\; \ell}}.
\end{align*}

From the assumption \eqref{ass-k} it follows that for each $j = 0, 1, 2$
\begin{align}\label{k}
|\partial^j K_l(x)| + |\partial^j \mathbf L K_l(x)| + |\partial^j |\mathbf L|^2 K_l(x)| \lesssim |x|^{b_l - j}.
\end{align}

We will show that  the nonlinear functional $\Psi(u) = e^{it\Delta}\varphi + \mathcal N(u)$ is a contraction on $\xl^{n, \;\ell}(T, \rho)$ for each case. Here
$$
\mathcal N (u) = -i\int_0^t e^{i(t-t')\Delta} [K_1Q_1(u) + K_2Q_2(u)]\,dt'.
$$
By $N_l^{n,\; \ell}$ we denote the derivatives of Duhamel part as follows:
\begin{align*}
N_l^{n,\; 0} &= -i\partial^n \int_0^t e^{i(t-t')\Delta} [K_lQ_l(u)]\,dt',\\
N_l^{n,\; 1} &= -i\partial^n \mathbf L\int_0^t e^{i(t-t')\Delta} [K_lQ_l(u)]\,dt',\\
N_l^{n,\; 2} &= -i\partial^n |\mathbf L|^2\int_0^t e^{i(t-t')\Delta} [K_lQ_l(u)]\,dt'.
\end{align*}
We have by Leibniz rule and Lemma \ref{conv-radial} that for $n = 1, 2$
\begin{align*}
N_l^{n,\; 0} &= -i\sum_{k = 0}^n\left(
                                \begin{array}{c}
                                  n \\
                                  k \\
                                \end{array}
                              \right)
\int_0^t e^{i(t-t')\Delta}[(\partial^{n-k}  K_l)\partial^k Q_l(u), \\
N_l^{n,\; 1} &= -i\sum_{k = 0}^n\left(
                                \begin{array}{c}
                                  n \\
                                  k \\
                                \end{array}
                              \right)
\int_0^t e^{i(t-t')\Delta}[(\partial^{n-k} \mathbf L K_l)\partial^k Q_l(u) + (\partial^{n-k} K_l) \partial^k\mathbf L Q_l(u)]\,dt',\\
N_l^{n,\; 2} &= -i\sum_{k = 0}^n\left(
                                \begin{array}{c}
                                  n \\
                                  k \\
                                \end{array}
                              \right)\int_0^t e^{i(t-t')\Delta}[(\partial^{n-k} |\mathbf L|^2 K_l)\partial^k Q_l(u) + 2(\partial^{n-k}\mathbf L K_l)\cdot \partial^k \mathbf L Q_l(u) + (\partial^{n-k}  K_l)\partial^k |\mathbf L|^2 Q_l(u)]\,dt'.
\end{align*}

\subsection{Case: $b_1 = b_2 = 0$}\label{case0}
Given $\rho$, it follows from Lemmas \ref{str} and \ref{hs-ineq} that for any $u \in \xloz(T, \rho)$
\begin{align*}
\|N_1^{1,\; 0}\|_{L_{\ist}^{10}L_x^\frac{30}{13} \cap L_{I_T}^\infty L_x^2} &\lesssim \||x|^{-1}|u|^3\|_{L_{\ist}^2 L_x^\frac65} + \||u|^2|\partial u|\|_{L_{\ist}^2 L_x^\frac65}\\
&\lesssim T^\frac12\Big(\|u\|_{L_{\ist}^\infty L_x^6}^2\|\frac{|u|}{|x|}\|_{L_{\ist}^\infty L_x^2} + \|u\|_{L_{\ist}^\infty L_x^6}^2\|\partial u\|_{L_{\ist}^\infty L_x^2 }\Big)\\
&\lesssim T^\frac12\|u\|_{L_{\ist}^\infty H^{1}}^3 \lesssim T^\frac12\rho^3.
\end{align*}
As for $N_2^{1,\; 0}$ we have
\begin{align*}
\|N_2^{1,\, 0}\|_{L_{\ist}^{10}L_x^\frac{30}{13} \cap L_{I_T}^\infty L_x^2} &\lesssim \||x|^{-1}|u|^5\|_{L_{\ist}^2 L_x^\frac65} + \||u|^4|\partial u|\|_{L_{\ist}^2 L_x^\frac65}\\
&\lesssim \Big(\|u\|_{L_{\ist}^{10} L_x^{10}}^4\|\frac{|u|}{|x|}\|_{L_{\ist}^{10} L_x^\frac{30}{13}} + \|u\|_{L_{\ist}^{10} L_x^{10}}^4\|\partial u\|_{L_{\ist}^{10} L_x^\frac{30}{13} }\Big)\\
&\lesssim \|u\|_{L_{\ist}^{10} H_\frac{30}{10}^{1}}^5 \lesssim \rho^5.
\end{align*}
 Hence we obtain
$$
\|\Psi(u)\|_{S_T^{1, \;0}} \le \|e^{it\Delta}\varphi\|_{S_T^{1, \;0}} + C(1+T^\frac12)(\rho^3 + \rho^5).
$$
The choice of $T = T(\varphi)$ and $\rho$ such that $\|e^{it\Delta}\varphi\|_{S_T^{1, \;0}} \le \rho/2$ and $C(1+T^\frac12)(\rho^3 + \rho^5) \le \rho/2$ shows the self-mapping of $\Psi$ from $\xloz(T, \rho)$ to $\xloz(T, \rho)$. We can also readily show that for a little smaller $T$
$$
d^{1,\; 0}(\Psi(u), \Psi(v)) \le \frac12d^{1, \;0}(u, v),
$$
because we have only to replace a $u$  with $u - v$ in the proof of self-mapping. Then the local well-posedness in $H^1$ is clear from the contraction.

\subsection{Case: $0 < b_1 < 1$}\label{case1}

Given $\rho$, from Lemmas \ref{sobo-sp}, \ref{hs-ineq}, \ref{ang-decay}, and \ref{str} we obtain that for any $u \in \xloo(T, \rho)$
\begin{align*}
&\|N_1^{1, 1}\|_{L_{\ist}^{10}L_x^\frac{30}{13} \cap L_{I_T}^\infty L_x^2}\\
 &\lesssim \||x|^{b_1-1}|u|^3\|_{L_{\ist}^1 L_x^2} + \||x|^{b_1}|u|^2|\partial u|\|_{L_{\ist}^1 L_x^2} + \||x|^{b_1-1}|u|^2|\mathbf Lu|\|_{{L_{\ist}^2 L_x^\frac65}}\\
&\qquad\qquad + \||x|^{b_1}|u||\partial u||\mathbf L u|\|_{L_{\ist}^2 L_x^\frac65} + \||x|^{b_1}|u|^2|\partial \mathbf L u|\|_{L_{\ist}^1 L_x^2}\\
&\lesssim T\Big(\||x|^\frac{b_1}2u\|_{L_{\ist}^\infty L_x^\infty}^2\|\frac{|u|}{|x|}\|_{L_{\ist}^\infty L_x^2} + \||x|^\frac{b_1}2u\|_{L_{\ist}^\infty L_x^\infty}^2\|\partial u\|_{L_{\ist}^\infty L_x^2 } + \||x|^\frac{b_1}2 u\|_{L_{\ist}^\infty L_x^\infty}^2\|\partial \mathbf L u\|_{L_{\ist}^\infty L_x^2}\Big)\\
&\quad + T^\frac12\Big(\|\frac{|u|}{|x|^{1-b_1}}\|_{L_{\ist}^\infty L_x^2}\|u\|_{L_{\ist}^\infty L_x^6}\|\mathbf L u\|_{L_{\ist}^\infty L_x^6} + \||x|^{b_1}u\|_{L_{\ist}^\infty L_x^\infty}\|\partial u\|_{L_{\ist}^\infty L_x^2}\|\mathbf L u\|_{L_{\ist}^\infty L_x^3} \Big)\\
&\lesssim (T+T^\frac12)\|u\|_{L_{\ist}^\infty \hl^{1, 1}}^3 \lesssim (T+T^\frac12)\rho^3.
\end{align*}
On the other hand, $N_2^{1, 1}$ consists of $u, \partial u$, $\mathbf L u$, $\partial \mathbf L u$, and additional $|u|^2$. For simplicity we only consider $\||x|^{b_2}|u|^3|\partial u||\mathbf L u|\|_{L_{\ist}^2 L_x^\frac65}$.
If $b_2 > 0$, then
$$
\||x|^{b_2}|u|^3|\partial u||\mathbf L u|\|_{L_{\ist}^2 L_x^\frac65} \lesssim T^\frac12\||x|^\frac{b_2}3|u|\|_{L_{\ist}^\infty L_x^\infty }^3\|\partial u\|_{L_{\ist}^\infty L_x^2}\|\mathbf L u\|_{L_{\ist}^\infty L_x^3} \lesssim T^\frac12\|u\|_{L_{\ist}^\infty \hl^{1, 1}}^5 \lesssim T^\frac12\rho^5.
$$
If $b_2 = 0$, then
$$
\||u|^3|\partial u||\mathbf L u|\|_{L_{\ist}^2 L_x^\frac65} = \|u\|_{L_{\ist}^{10} L_x^{10}}^3\|\partial u\|_{L_{\ist}^{10} L_x^{\frac{30}{13}}}\|\mathbf L u\|_{L_{\ist}^{10} L_x^{10}} \lesssim \|u\|_{L_{\ist}^\infty H_{\mathbf L,\; \frac{30}{13}}^{1,\; 1}}^5 \lesssim \rho^5.
$$
Hence we obtain that for $b_2 > 0$
$$
\|\Psi(u)\|_{S_T^{1,\; 1}} \lesssim \|\varphi\|_{\hl^{1, 1}} + (T+T^\frac12)(\rho^3 + \rho^5)
$$
and for $b_2 = 0$
$$
\|\Psi(u)\|_{S_T^{1,\; 1}} \lesssim \|e^{it\Delta}\varphi\|_{S_T^{1,\; 1}} + (1+T^\frac12)(\rho^3 + \rho^5).
$$
Now we can choose $T$ and $\rho$ so that $\Psi$ becomes self-mapping from $\xloo(T, \rho)$ to $\xloo(T, \rho)$, and also choose  a little smaller $T$ so that
$$
d^{1, \;1}(\Psi(u), \Psi(v)) \le \frac12d^{1, \;1}(u, v).
$$
This completes the proof of part $(2)$ of Theorem \ref{lwp}.

\subsection{Case: $b_1 = 1$}

In view of the proof in Section \ref{case1} we have only to estimate $N_l^{1,\; 2}$ for the contraction on $\xlot(T, \rho)$.
From \eqref{k} we get
\begin{align*}
&\|N_1^{1, \;2}\|_{L_{\ist}^{10}L_x^\frac{30}{13} \cap L_{I_T}^\infty L_x^2}\\
 &\lesssim \||u|^3\|_{L_{\ist}^1 L_x^2} + \||x||u|^2|\partial u|\|_{L_{\ist}^1 L_x^2} + \||u|^2|\mathbf L u|\|_{L_{\ist}^1 L_x^2} + \||u||\mathbf L u|^2\|_{{L_{\ist}^1 L_x^2}} + \||u|^2||\mathbf L|^2u|\|_{{L_{\ist}^1 L_x^2}}\\
 &\quad + \||x||\partial u||\mathbf L u|^2\|_{{L_{\ist}^1 L_x^2}} + \||x||u|^2|\partial \mathbf Lu|\|_{{L_{\ist}^1 L_x^2}} + \||x||u||\mathbf L u||\partial \mathbf Lu|\|_{{L_{\ist}^1 L_x^2}} + \||x||u||\partial u|||\mathbf L|^2 u|\|_{{L_{\ist}^2 L_x^\frac65}}\\
 &\quad + \||x||u|^2|\partial |\mathbf L|^2u|\|_{{L_{\ist}^1 L_x^2}}\\
&\lesssim T\Big(\|u\|_{L_{\ist}^\infty L_x^6}^3 + \||x|^\frac12u\|_{L_{\ist}^\infty L_x^\infty}^2\|\partial u\|_{L_{\ist}^\infty L_x^2 } + \|u\|_{L_{\ist}^\infty L_x^6}^2\|\mathbf L u\|_{L_{\ist}^\infty L_x^6} + \|u\|_{L_{\ist}^\infty L_x^6}\|\mathbf L u\|_{L_{\ist}^\infty L_x^6}^2\\
&\qquad\qquad + \|u\|_{L_{\ist}^\infty L_x^6}^2\||\mathbf L|^2 u\|_{L_{\ist}^\infty L_x^6} + \|\partial u\|_{L_{\ist}^\infty L_x^2}\||x|^\frac12\mathbf L u\|_{L_{\ist}^\infty L_x^\infty}^2 + \||x|^\frac12 u\|_{L_{\ist}^\infty L_x^\infty}\||x|^\frac12\mathbf L u\|_{L_{\ist}^\infty L_x^\infty}\|\partial \mathbf L u\|_{L_{\ist}^\infty L_x^2 }\Big)\\
&\qquad\qquad + T^\frac12\||x| u\|_{L_{\ist}^\infty L_x^\infty}\|\partial u\|_{L_{\ist}^\infty L_x^2}\||\mathbf L|^2 u\|_{L_{\ist}^\infty L_x^3} + T\||x|^\frac12 u\|_{L_{\ist}^\infty L_x^\infty}^2\|\partial |\mathbf L|^2u\|_{L_{\ist}^\infty L_x^2}\\
&\lesssim (T+T^\frac12)\rho^3.
\end{align*}
Similarly we obtain
$$
\|N_2^{1,\; 2}\|_{L_{\ist}^{10}L_x^\frac{30}{13} \cap L_{I_T}^\infty L_x^2} \lesssim \left\{\begin{array}{l}(T + T^\frac12)\rho^5\;\; \mbox{if}\;\; b_2 > 0,\\
\rho^5\;\; \mbox{if}\;\; b_2 = 0.\end{array}\right.
$$

\subsection{Case: $1 < b_1 < 2$}
In this case we use a modified complete metric space $\xl^{1, 2, 1}(T, \rho) = \xlot(T, \rho) \cap \xlto(T, \rho)$ with metric $d^{1, 2, 1} = d^{1, 2} + d^{2, 1}$.
To show the contraction on $\xl^{1, 2, 1}(T, \rho)$ we consider $N_l^{2, 1}$ and $N_l^{1, 2}$.
Using \eqref{k}, and Lemmas \ref{ang-sobo} and \ref{hs-ineq}, we have
\begin{align*}
&\|N_1^{1, 2}\|_{L_{\ist}^{10}L_x^\frac{30}{13} \cap L_{I_T}^\infty L_x^2}\\
 &\lesssim \||x|^{b_1-1}|u|^3\|_{L_{\ist}^1 L_x^2} + \||x|^{b_1}|u|^2|\partial u|\|_{L_{\ist}^1 L_x^2} + \||x|^{b_1-1}|u|^2|\mathbf L u|\|_{L_{\ist}^1 L_x^2} + \||x|^{b_1-1}|u||\mathbf L u|^2\|_{L_{\ist}^1 L_x^2}\\
 &\quad  + \||x|^{b_1-1}|u|^2||\mathbf L|^2u|\|_{L_{\ist}^1 L_x^2} + \||x|^{b_1}|\partial u||\mathbf L u|^2\|_{L_{\ist}^1 L_x^2} + \||x|^{b_1}|u||\mathbf L u||\partial \mathbf Lu|\|_{L_{\ist}^1 L_x^2} \\
 &\quad + \||x|^{b_1}|u||\partial u|||\mathbf L|^2 u|\|_{L_{\ist}^1 L_x^2} + \||x|^{b_1}|u|^2|\partial |\mathbf L|^2u|\|_{{L_{\ist}^1 L_x^2}}\\
&\lesssim T\Big(\||x|^{b_1-1}u\|_{L_{\ist}^\infty L_x^\infty}\|u\|_{L_{\ist}^\infty L_x^4}^2 + \||x|^\frac{b_1}2u\|_{L_{\ist}^\infty L_x^\infty}^2\|\partial u\|_{L_{\ist}^\infty L_x^2 } + \||x|^\frac{b_1-1}2u\|_{L_{\ist}^\infty L_x^\infty}^2\|\mathbf L u\|_{L_{\ist}^\infty L_x^2}\\
&\quad  + \||x|^{b_1-1}u\|_{L_{\ist}^\infty L_x^\infty}\|\mathbf L u\|_{L_{\ist}^\infty L_x^4}^2 + \||x|^\frac{b_1-1}2u\|_{L_{\ist}^\infty L_x^\infty}^2\||\mathbf L|^2 u\|_{L_{\ist}^\infty L_x^2} + \|\partial u\|_{L_{\ist}^\infty L_x^2}\||x|^\frac{b_1}2\mathbf L u\|_{L_{\ist}^\infty L_x^\infty}^2\\
&\quad +\||x|^\frac{b_1}2 u\|_{L_{\ist}^\infty L_x^\infty}\||x|^\frac{b_1}2\mathbf L u\|_{L_{\ist}^\infty L_x^\infty}\|\partial \mathbf L u\|_{L_{\ist}^\infty L_x^2 } + \||x|^\frac{b_1}2 u\|_{L_{\ist}^\infty L_x^\infty}\||x|^\frac{b_1}2\partial u\|_{L_{\ist}^\infty L_x^\infty}\||\mathbf L|^2 u\|_{L_{\ist}^\infty L_x^2}\\
&\quad + \||x|^\frac{b_1}2 u\|_{L_{\ist}^\infty L_x^\infty}^2\|\partial |\mathbf L|^2u\|_{L_{\ist}^\infty L_x^2}\Big)\\
&\lesssim T\|u\|_{L_{\ist}^\infty (\hl^{1, 2}\cap \hl^{2,1})}^3\\
& \lesssim T\rho^3
\end{align*}
and
\begin{align*}
&\|N_1^{2, 1}\|_{L_{\ist}^{10}L_x^\frac{30}{13} \cap L_{I_T}^\infty L_x^2}\\
 &\lesssim \||x|^{b_1-2}|u|^3\|_{L_{\ist}^1 L_x^2} + \||x|^{b_1-1}|u|^2|\partial u|\|_{L_{\ist}^1 L_x^2} + \||x|^{b_1}|u||\partial u|^2\|_{L_{\ist}^1 L_x^2} + \||x|^{b_1}|u|^2|\partial^2 u|\|_{L_{\ist}^1 L_x^2}\\
 &\quad + \||x|^{b_1-2}|u|^2|\mathbf Lu|\|_{{L_{\ist}^1 L_x^2}} + \||x|^{b_1-1}|u||\partial u||\mathbf L u|\|_{L_{\ist}^1 L_x^2} + \||x|^{b_1-1}|u|^2|\partial \mathbf L u|\|_{L_{\ist}^1 L_x^2}\\
 &\quad + \||x|^{b_1}|u||\partial^2u||\mathbf Lu|\|_{{L_{\ist}^1 L_x^2}} + \||x|^{b_1}|\partial u|^2|\mathbf Lu|\|_{{L_{\ist}^1 L_x^2}}  + \||x|^{b_1}|u||\partial u||\partial \mathbf Lu|\|_{{L_{\ist}^1 L_x^2}}\\
  &\quad + \||x|^{b_1}|u|^2|\partial^2 \mathbf Lu|\|_{{L_{\ist}^1 L_x^2}}\\
&\lesssim T\Big(\||x|^\frac{b_1-1}2u\|_{L_{\ist}^\infty L_x^\infty}^2\|\frac{|u|}{|x|}\|_{L_{\ist}L_x^2} + \||x|^\frac{b_1-1}2u\|_{L_{\ist}^\infty L_x^\infty}^2\|\partial u\|_{L_{\ist}^\infty L_x^2 } \\
&\quad  + \||x|^\frac{b_1}2 u\|_{L_{\ist}^\infty L_x^\infty}\||x|^\frac{b_1}2 \partial u\|_{L_{\ist}^\infty L_x^\infty }\|\partial u\|_{L_{\ist}^\infty L_x^2 } + \||x|^\frac{b_1}2 u\|_{L_{\ist}^\infty L_x^\infty}^2\|\partial^2 u\|_{L_{\ist}^\infty L_x^2 } \\
&\quad + \||x|^{b_1-2}|u|\|_{L_{\ist}^\infty L_x^2}\|u\|_{L_{\ist}^\infty L_x^\infty}\|\mathbf L u\|_{L_{\ist}^\infty L_x^\infty} + \||x|^{b_1-1}u\|_{L_{\ist}^\infty L_x^\infty}\|\partial u\|_{L_{\ist}^\infty L_x^2}\|\mathbf L u\|_{L_{\ist}^\infty L_x^\infty}\\
&\quad  + \||x|^{b_1-1}u\|_{L_{\ist}^\infty L_x^\infty}\| u\|_{L_{\ist}^\infty L_x^\infty}\|\partial \mathbf L u\|_{L_{\ist}^\infty L_x^2} + \||x|^\frac{b_1}2u\|_{L_{\ist}^\infty L_x^\infty}\|\partial^2 u\|_{L_{\ist}^\infty L_x^2}\||x|^\frac{b_1}2 \mathbf L u\|_{L_{\ist}^\infty L_x^\infty } \\
&\quad  +\||x|^\frac{b_1}2\partial u\|_{L_{\ist}^\infty L_x^\infty}\||x|^\frac{b_1}2\partial u\|_{L_{\ist}^\infty L_x^\infty}\| \mathbf L u\|_{L_{\ist}^\infty L_x^2} + \||x|^\frac{b_1}2 u\|_{L_{\ist}^\infty L_x^\infty}\||x|^\frac{b_1}2\partial u\|_{L_{\ist}^\infty L_x^\infty}\|\partial \mathbf L u\|_{L_{\ist}^\infty L_x^2}\\
&\quad  + \||x|^\frac{b_1}2 u\|_{L_{\ist}^\infty L_x^\infty}\||x|^\frac{b_1}2 u\|_{L_{\ist}^\infty L_x^\infty}\|\partial^2 \mathbf L u\|_{L_{\ist}^\infty L_x^2} \Big)\\
&\lesssim T\|u\|_{L_{\ist}^\infty (\hl^{1, 2}\cap \hl^{2,1})}^3\\
& \lesssim T\rho^3.
\end{align*}
Also we have $\|N_2^{2, 1}\|_{L_{\ist}^{10}L_x^\frac{30}{13} \cap L_{\ist}^\infty L_x^2} \lesssim T \rho^5$ for $b_2 > 0$ and $\lesssim \rho^5$ for $b_2 = 0$.

\subsection{Case: $b_1 = 2$}
As above we consider $N_l^{2, 2}$.
Together with Lemmas \ref{str},  \ref{hs-ineq}, and \ref{ang-sobo}, the bound \eqref{k} of $K_\ell$ gives us
\begin{align*}
&\|N_1^{2, 2}\|_{L_{\ist}^{10}L_x^\frac{30}{13} \cap L_{I_T}^\infty L_x^2}\\
 &\lesssim \||u|^3\|_{L_{\ist}^1 L_x^2} + \||x||u|^2|\partial u|\|_{L_{\ist}^1 L_x^2} + \||x|^2|u||\partial u|^2\|_{L_{\ist}^1 L_x^2} + \||x|^2|u|^2|\partial^2 u|\|_{L_{\ist}^1 L_x^2}\\
 &\quad + \||u|^2|\mathbf Lu|\|_{{L_{\ist}^1 L_x^2}} + \||x||u||\partial u||\mathbf L u|\|_{L_{\ist}^1 L_x^2} + \||x||u|^2|\partial \mathbf L u|\|_{L_{\ist}^1 L_x^2} + \||x|^2|u||\partial^2u||\mathbf Lu|\|_{{L_{\ist}^1 L_x^2}}\\
 &\quad  + \||x|^2|\partial u|^2|\mathbf Lu|\|_{{L_{\ist}^1 L_x^2}} + \||x|^2|u||\partial u||\partial \mathbf Lu|\|_{{L_{\ist}^1 L_x^2}} + \||x|^2|u|^2(|\partial^2 \mathbf Lu| + |\partial^2|\mathbf L|^2u|)\|_{{L_{\ist}^1 L_x^2}}\\
&\lesssim T\Big(\|u\|_{L_{\ist}^\infty L_x^6}^3 + \||x|^\frac12u\|_{L_{\ist}^\infty L_x^\infty}^2\|\partial u\|_{L_{\ist}^\infty L_x^2 } + \||x| u\|_{L_{\ist}^\infty L_x^\infty}\||x| \partial u\|_{L_{\ist}^\infty L_x^\infty }\|\partial u\|_{L_{\ist}^\infty L_x^2 } \\
&\quad + \||x| u\|_{L_{\ist}^\infty L_x^\infty}^2\|\partial^2 u\|_{L_{\ist}^\infty L_x^2 } + \|u\|_{L_{\ist}^\infty L_x^\infty}^2\|\mathbf L u\|_{L_{\ist}^\infty L_x^\infty} + \||x|u\|_{L_{\ist}^\infty L_x^\infty}\|\partial u\|_{L_{\ist}^\infty L_x^2}\|\mathbf L u\|_{L_{\ist}^\infty L_x^\infty}\\
&\quad + \||x|u\|_{L_{\ist}^\infty L_x^\infty}\| u\|_{L_{\ist}^\infty L_x^\infty}\|\partial \mathbf L u\|_{L_{\ist}^\infty L_x^2} + \||x|u\|_{L_{\ist}^\infty L_x^\infty}\|\partial^2 u\|_{L_{\ist}^\infty L_x^2}\||x| \mathbf L u\|_{L_{\ist}^\infty L_x^\infty }\\
&\quad +\||x|\partial u\|_{L_{\ist}^\infty L_x^\infty}\||x|\partial u\|_{L_{\ist}^\infty L_x^\infty}\| \mathbf L u\|_{L_{\ist}^\infty L_x^2}  + \||x| u\|_{L_{\ist}^\infty L_x^\infty}\||x|\partial u\|_{L_{\ist}^\infty L_x^\infty}\|\partial \mathbf L u\|_{L_{\ist}^\infty L_x^2} \\
&\quad + \||x| u\|_{L_{\ist}^\infty L_x^\infty}^2(\|\partial^2 \mathbf L u\|_{L_{\ist}^\infty L_x^2} +  \|\partial^2 |\mathbf L|^2 u\|_{L_{\ist}^\infty L_x^2}\Big)\\
&\lesssim T\|u\|_{L_{\ist}^\infty \hl^{1, 2}}^3\\
& \lesssim T\rho^3
\end{align*}
and $\|N_2^{2, 2}\|_{L_{\ist}^{10}L_x^\frac{30}{13} \cap L_{I_T}^\infty L_x^2} \lesssim T\rho^5$  for $b_2 > 0$ and $\lesssim \rho^5$ for $b_2 = 0$.

\subsection{Mass and energy conservation}
According to the nonlinear estimates above, one can readily show that if $\varphi \in \hl^{2, 2}$ (or $\in H^2$) then the solution $u \in C(I_*; \hl^{2, 2})$ for $b_1 > 0$ (or $\in C(I_*; H^2)$ for $b_l = 0$, respectively).
So we first assume that $\varphi \in \hl^{2, 2}$ ( or $\in H^2$). Then the map $g(u) = K_1Q_1(u) + K_2Q_2(u) \in C(\hl^{2,\; 2}, L_x^2)$. Hence for any $I_T \subset I_*$ if $u \in C(I_T; \hl^{2,\; 2})$ (or $\in C(I_T; H^2)$), then $u_t \in C(I_T; L_x^2)$. The mass or energy conservation follows from $\hl^{2, \;2}$ (or $H^2$) regularity. By continuous dependency and standard limiting argument for the sequence $\varphi_k \in \hl^{2,\; 2}$ (or $H^2$) with $\varphi_k \to \varphi$ in $\hl^{1, 1}$ or $\hl^{1,\; 2}$ (or $H^1$, respectively), we get the mass and energy conservation in the case that $b_1 > 0$ (or $b_l = 0$).

\section{Proof of Blowup}

\newcommand{\te}{\theta_{\varepsilon}}
\newcommand{\fe}{f_\varepsilon}
\newcommand{\ma}{\mathcal{A}}


We show the finite time blowup via standard virial argument. To avoid duplication of proof we consider the case $b_1 > 0$. For the case of constant $K_l$ see \cite{caz}.
\begin{lem}
Let $\varphi \in \hl^{1, 1}$ and $x\varphi \in L_x^2$, and let $u$ be the solution of \eqref{main eqn} in $C([-T, T]; \hl^{1, 1})$. Then $xu \in C([-T,T]; L_x^2)$ and it satisfies that
\begin{align}\label{virial}
\|||x|u(t)\|_{L_x^2}^2 = \||x|\varphi\|_{L_x^2}^2 + 4\int_0^t\ma(s)\,ds,
\end{align}
where $\ma(t) = {\rm Im}\int \overline u(t) x\cdot \nabla u(t)\,dx$.
\begin{align}\label{dilation}
\ma(t) = 4tE(\varphi) + \frac12\int_0^t\!\!\!\int (K_1 - x\cdot \nabla K_1)|u|^4\,dxds + \frac13\int_0^t\!\!\!\int (4K_2- x\cdot \nabla K_2) |u|^6\,dxds.
\end{align}
\end{lem}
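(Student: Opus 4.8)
The plan is to read the two displayed formulas as the first and second time-``derivatives'' of the variance $V(t) := \||x|u(t)\|_{L_x^2}^2 = \int |x|^2|u(t)|^2\,dx$, and to obtain them by differentiating $V$, substituting the equation $u_t = i\Delta u - i(K_1Q_1(u)+K_2Q_2(u))$, and integrating by parts against the dilation field $x\cdot\nabla$. Two elementary facts will drive the whole computation: since $K_l$ is real-valued, $\overline u\,Q_l(u)=|u|^{2l+2}$ is real, which forces the nonlinear terms out of $V'$; and $\mathrm{Re}(\overline u\,x\cdot\nabla u)=\tfrac12\,x\cdot\nabla|u|^2$, which lets me transfer the dilation field off $u$ and onto $K_l$ when computing $V''$.

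For \eqref{virial} I would differentiate $V$ and insert the equation. In $\tfrac{d}{dt}\int|x|^2|u|^2 = 2\int|x|^2\,\mathrm{Re}(\overline u\,u_t)$ the nonlinear contributions vanish because $\mathrm{Re}(\overline u\,u_t)$ only sees $\mathrm{Re}(i\overline u\Delta u)$ (after multiplication by $\overline u$ the $K_l$-terms are purely imaginary). Integrating by parts with $\nabla|x|^2=2x$ then leaves $V'(t)=4\,\mathrm{Im}\int \overline u\,x\cdot\nabla u\,dx = 4\mathcal{A}(t)$, and integrating in $t$ gives \eqref{virial}.

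For \eqref{dilation} I would differentiate $\mathcal{A}(t)=\mathrm{Im}\int\overline u\,x\cdot\nabla u\,dx$. Integrating by parts in the $\int\overline u\,(x\cdot\nabla u_t)$ piece splits $\tfrac{d}{dt}\mathcal{A}$ into $2\,\mathrm{Im}\int\overline{u_t}\,(x\cdot\nabla u)\,dx - 3\,\mathrm{Im}\int\overline u\,u_t\,dx$. Substituting the equation into both: the kinetic contributions give the classical dilation identity and combine to $2\|\nabla u\|_{L_x^2}^2$, while the nonlinear contributions, via $\mathrm{Re}(\overline u\,x\cdot\nabla u)=\tfrac12 x\cdot\nabla|u|^2$ and a further integration by parts that moves $x\cdot\nabla$ onto $K_l$, produce the terms $\int K_l|u|^{2l+2}\,dx$ and $\int (x\cdot\nabla K_l)|u|^{2l+2}\,dx$. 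Collecting the kinetic term together with the bare $\int K_l|u|^{2l+2}$ terms and rewriting them through the conserved energy $E(\varphi)=\tfrac12\|\nabla u\|^2+\tfrac14\int K_1|u|^4+\tfrac16\int K_2|u|^6$ yields $\tfrac{d}{dt}\mathcal{A}(t)=4E(\varphi)+\tfrac12\int(K_1-x\cdot\nabla K_1)|u|^4\,dx+\tfrac13\int(4K_2-x\cdot\nabla K_2)|u|^6\,dx$; integrating from $0$ to $t$ (the integration constant is $\mathcal{A}(0)$) then gives \eqref{dilation}.

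The main obstacle is not the algebra but its \emph{justification} at the solution's regularity: at the level $H_{\mathbf L}^{1,1}$ neither $x\cdot\nabla u$ nor $|x|^2|u|^2$ is a priori controlled, and the integrations by parts require decay. I would regularize the weight, replacing $|x|^2$ by a bounded smooth approximation $\theta_\varepsilon \nearrow |x|^2$ with $|\nabla\theta_\varepsilon|^2 \lesssim \theta_\varepsilon$ and $|\Delta\theta_\varepsilon|\lesssim 1$ uniformly in $\varepsilon$. For $V_\varepsilon(t):=\int\theta_\varepsilon|u|^2$ the identity $V_\varepsilon'(t)=2\,\mathrm{Im}\int\nabla\theta_\varepsilon\cdot\overline u\,\nabla u\,dx$ is legitimate for $H^1$ solutions, and Cauchy--Schwarz with the gradient bound gives $|V_\varepsilon'(t)|\lesssim V_\varepsilon(t)^{1/2}\|\nabla u\|_{L_x^2}$; a Gronwall argument using $x\varphi\in L_x^2$ then bounds $V_\varepsilon$ uniformly, so that $\varepsilon\to 0$ simultaneously delivers $xu\in C([-T,T];L_x^2)$ and \eqref{virial}. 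To obtain \eqref{dilation} rigorously I would first run the smooth computation on regular approximating data $\varphi_k\to\varphi$ in $H_{\mathbf L}^{1,1}$ with $x\varphi_k\to x\varphi$ in $L_x^2$, where every manipulation is valid, and then pass to the limit using continuous dependence together with the nonlinear estimates of Section \ref{sec4}. This approximation-and-limit step is the delicate point; the remaining computations are routine.
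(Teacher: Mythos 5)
Your proposal is correct and follows essentially the same route as the paper: regularize the weight $|x|^2$, derive the bound $\sqrt{V_\varepsilon(t)}\le\||x|\varphi\|_{L_x^2}+C\int_0^t\|\nabla u(s)\|_{L_x^2}\,ds$ and pass to the limit by Fatou/monotone convergence to get $xu\in C([-T,T];L_x^2)$ and \eqref{virial}, then reduce to regular approximating data ($\varphi_k\in H^2$ with $x\varphi_k\to x\varphi$ in $L_x^2$) and a truncated dilation quantity, using continuous dependence, to justify \eqref{dilation}. Your remark that the integration constant $\mathcal{A}(0)$ must appear on the right-hand side of \eqref{dilation} is consistent with how the identity is actually used in \eqref{concavity}.
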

\begin{proof}
Let $\te(x) = e^{-\varepsilon |x|^2}$ and $\fe(t) = \| \te(x)|x|u(t)\|_{L_x^2}^2\,dx$. Then since $g(u) = \sum_{l = 1,2}K_l Q_l(u) \in C([-T, T], L_x^\frac65)$, by direct differentiation one can easily obtain that
\begin{align*}
\fe(t) = \fe(0) + 4\int_0^t {\rm Im}\int \te(x)(1-2\varepsilon |x|^2)\te(x) \overline u x\cdot \nabla u,dxdt,\\
\end{align*}
and thus
$$
 \sqrt{\fe(t)} \le \||x|\varphi\|_{L_x^2} + C\int_0^t \|\nabla u(t)\|_{L_x^2}\,ds.
$$
Using Fatou's lemma, we obtain that $xu(t) \in (L^\infty \cap C)([-T,T]; L_x^2)$ and \eqref{virial}.

Here one can also show that if a sequence $\{\varphi_k\} \subset  \hl^{2, 2}$ satisfies that $\varphi_k \to \varphi$ in $\hl^{n, \ell}$ and $x\varphi_k \to x\varphi$ in $L_x^2$, then the solution sequence $\{u_k\}$ satisfies that
\begin{align}\label{weight-conv}
xu_k \to xu \quad {\rm in} \quad (L^\infty \cap C)([-T, T]; L_x^2).
\end{align}

\newcommand{\mae}{\mathcal{A}_\varepsilon}
Due to the continuous dependency on the initial data and \eqref{weight-conv} we may assume that $\varphi \in H^2$ and $u \in C([-T, T]; H^2 \cap \hl^{n, \;\ell}) \cap C^1([-T, T]; L_x^2)$ and $xu \in C([-T, T]; L_x^2)$.
Let us consider a modified quantity ${\rm Im}\int \te \overline u x\cdot \nabla u\,dx$. Then the identity \eqref{dilation} follow from direct differentiation of this quantity and standard limiting argument $\varepsilon \to 0$.

Now from \eqref{virial} and \eqref{dilation}, and from the condition of $K_l$ it follows that
\begin{align}\label{concavity}
\||x|u(t)\|_{L_x^2}^2 \le \||x|\varphi\|_{L_x^2}^2 + 4t{\rm Im}\int\overline{\varphi} x\cdot \nabla \varphi\,dx + (8+4\alpha)t^2 E(\varphi).
\end{align}
Since $E(\varphi) < 0$, \eqref{concavity} gives us the finite time blowup.
\end{proof}

\section{Scattering: Proof of Theorem \ref{scattering}}

\subsection{Nonlinear estimates}

\begin{lem}\label{cubic-est}
Let $0 \le b_1 < \frac23$. Then we have for any $f \in H_{\mathbf L}^{1, 1}$, $g \in H^1$, and $h \in L_x^6$ we have
$$
\||x|^{b_1-1}fgh\|_{L_x^\frac65} + \||x|^{b_1}fgh\|_{L_x^\frac65} \lesssim \|f\|_{H_{\mathbf L}^{1, 1}}\|g\|_{ H^1}\|h\|_{L_x^6}.
$$
\end{lem}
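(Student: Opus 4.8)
The plan is to estimate the two terms separately with H\"older's inequality in $L_x^{6/5}(\mathbb R^3)$, always letting $f$ absorb the weight (via its $\dot H^1$ regularity in the singular case and its angular regularity in the growing case) and distributing the remaining integrability between $g$ and $h$ so that the H\"older exponents sum to $\tfrac56$. In both terms $h$ is forced into $L_x^6$, since that is all we control for $h$; the freedom lies in how we split the weight-carrying factor $f$ against $g\in H^1$. The key is to choose the intermediate Lebesgue exponents so that exactly one Sobolev-type inequality applies to each factor.

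For the singular term, write $|x|^{b_1-1}=|x|^{-s}$ with $s=1-b_1\in(\tfrac13,1]$. I would apply H\"older with $\tfrac1p+\tfrac1q+\tfrac16=\tfrac56$, taking $p=\tfrac{6}{3-2b_1}$ and $q=\tfrac{6}{1+2b_1}$, both of which lie in $[2,6]$ for $0\le b_1<\tfrac23$. Lemma~\ref{hs-ineq} then gives $\||x|^{-s}f\|_{L_x^p}\lesssim\|f\|_{\dot H_p^s}$, which is applicable since $0<s<\tfrac3p=\tfrac32-b_1$, and the Sobolev embedding $\dot H^1\hookrightarrow\dot H_p^s$ (valid because $1-\tfrac32=s-\tfrac3p$ and $p\ge2$) yields $\|f\|_{\dot H_p^s}\lesssim\|f\|_{\dot H^1}\lesssim\|f\|_{\hl^{1,1}}$. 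Together with $\|g\|_{L_x^q}\lesssim\|g\|_{H^1}$ and the factor $\|h\|_{L_x^6}$, this closes the first term using only the $H^1$ part of $f$.

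For the growing term $\||x|^{b_1}fgh\|_{L_x^{6/5}}$ with $b_1>0$ (the case $b_1=0$ being immediate from Sobolev embedding), I would use the split $\||x|^{b_1}fgh\|_{L_x^{6/5}}\le\||x|^{b_1}f\|_{L_x^6}\|g\|_{L_x^2}\|h\|_{L_x^6}$, with exponents $\tfrac16+\tfrac12+\tfrac16=\tfrac56$. Here the weight must sit on $f$, as $f$ is the only factor with angular regularity. Applying Lemma~\ref{ang-decay} with $p=2$ and $\varepsilon=\tfrac13$ (so that $p/\varepsilon=6$) gives $\||x|^{b_1}f\|_{L_x^6}\lesssim\|f\|_{\hl^{1,1}}^{2/3}\|f\|_{L_x^2}^{1/3}\lesssim\|f\|_{\hl^{1,1}}$, while $\|g\|_{L_x^2}\lesssim\|g\|_{H^1}$. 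The hypothesis $\varepsilon<1-b_1$ of Lemma~\ref{ang-decay} reads precisely $\tfrac13<1-b_1$, i.e.\ $b_1<\tfrac23$.

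I expect the growing term to be the crux: it is the only place the restriction $b_1<\tfrac23$ enters, and it dictates the particular balance $g\in L_x^2$, $|x|^{b_1}f\in L_x^6$. Among the admissible splittings $\||x|^{b_1}f\|_{L_x^a}\|g\|_{L_x^q}\|h\|_{L_x^6}$ with $2\le q\le6$, placing $g$ in $L_x^2$ maximizes the growth rate one can afford, since moving $g$ to a larger $L_x^q$ would require $|x|^{b_1}f$ in a space demanding $\varepsilon\ge1-b_1$, beyond the range of Lemma~\ref{ang-decay}. The singular term, by comparison, is routine once the Hardy--Sobolev inequality and the embedding $\dot H^1\hookrightarrow\dot H_p^s$ are in place.
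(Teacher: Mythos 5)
Your proposal is correct. For the growing term $\||x|^{b_1}fgh\|_{L_x^{6/5}}$ you use exactly the paper's H\"older split $\||x|^{b_1}f\|_{L_x^6}\|g\|_{L_x^2}\|h\|_{L_x^6}$; the only cosmetic difference is the parameter choice in Lemma~\ref{ang-decay} (you take the clean endpoint $p=2$, $\varepsilon=\tfrac13$ and bound $\|f\|_{L_x^2}\le\|f\|_{\hl^{1,1}}$ directly, whereas the paper takes $\tfrac13<\varepsilon<1-b_1$, $p=6\varepsilon\in(2,6)$ and then invokes $H^1\hookrightarrow L^p$), and both correctly locate the restriction $b_1<\tfrac23$ in the hypothesis $\varepsilon<1-b_1$. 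For the singular term you genuinely diverge: the paper factors the weight as $|x|^{b_1}\cdot|x|^{-1}$, sending $|x|^{-1}g$ to $L_x^2$ by Hardy (Lemma~\ref{hs-ineq} with $s=1$, $p=2$) and $|x|^{b_1}f$ to $L_x^6$ by the same angular estimate as in the growing term, so that both terms are reduced to the single quantity $\||x|^{b_1}f\|_{L_x^6}$; you instead keep the full net weight $|x|^{b_1-1}$ on $f$, apply the Hardy--Sobolev inequality at the exponents $p=\tfrac{6}{3-2b_1}$, $s=1-b_1$ (the conditions $0<s<\tfrac3p$ and the scaling relation $1-\tfrac32=s-\tfrac3p$ for $\dot H^1\hookrightarrow\dot H_p^s$ all check out on the stated range), and place $g$ in $L_x^{6/(1+2b_1)}\supset H^1$. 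Your route buys the observation that the singular term needs no angular regularity at all --- only $f\in H^1$ --- so the $\hl^{1,1}$ norm is truly only consumed by the growing weight; the paper's route buys uniformity, since one single estimate $\||x|^{b_1}f\|_{L_x^6}\lesssim\|f\|_{\hl^{1,1}}$ closes both terms at once. Either way the stated conclusion and its use in the scattering argument are unaffected.
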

\begin{proof}
For the first term we have from Lemma \ref{hs-ineq} that
$$
\||x|^{b_1-1}fgh\|_{L_x^\frac65} \lesssim \||x|^{b_1}f\|_{L_x^6}\||x|^{-1}g\|_{L_x^2}\|h\|_{L_x^6} \lesssim \||x|^{b_1}f\|_{L^6}\|g\|_{H^1}\|h\|_{L_x^6}.
$$
If $b_1 = 0$, then we are done by Sobolev embedding. If $b_1> 0$, then let us choose $\varepsilon$ such that $\frac13 < \varepsilon < 1 - b_1$ and set $p = 6\varepsilon$. Then by Lemma \ref{ang-decay} with $b = b_1$ and $p = 6\varepsilon$ we have
$$
\||x|^{b_1}f\|_{L_x^6} \lesssim \|f\|_{H_{\mathbf L}^{1, 1}}^{1-\varepsilon}\|f\|_{L_x^p}^\varepsilon.
$$
Since $2 < p < 6 $, Sobolev embedding gives the desired bound.

By the same way we can treat the second term as follows. If $b_1 > 0$
$$
\||x|^{b_1}fgh\|_{L_x^\frac65} \lesssim \||x|^{b_1}f\|_{L_x^6}\|g\|_{L_x^2}\|h\|_{L_x^6} \lesssim \|f\|_{H_{\mathbf L}^{1, 1}}\|g\|_{ H^1}\|h\|_{L_x^6}.
$$
If $b_1 = 0$, then for small positive $\varepsilon$ we have
$$
\|fgh\|_{L_x^\frac65} \lesssim \||x|^{\varepsilon}f\|_{L_x^6}\||x|^{-\varepsilon}g\|_{L_x^2}\|h\|_{L_x^6} \lesssim \|f\|_{H_{\mathbf L}^{1, 1}}\|g\|_{ H^1}\|h\|_{L_x^6}.
$$

\end{proof}

\begin{lem}\label{quintic-est}
$(1)$ Let $0 \le b_2 < \frac83$. Then we have for any $f_1, f_2, f_2 \in H_{\mathbf L}^{1, 1}$, $g \in H^1$, and $h \in L_x^6$ we have
$$
\||x|^{b_2-1}f_1f_2f_3gh\|_{L_x^\frac65} + \||x|^{b_2}f_1f_2f_3gh\|_{L_x^\frac65} \lesssim \prod_{j = 1}^3\|f_j\|_{H_{\mathbf L}^{1, 1}}\|g\|_{ H^1}\|h\|_{L_x^6}.
$$
$(2)$ If $b_2 = 0$, then we have for any $f_1, f_2, f_2 \in H_{\mathbf L, \frac{30}{13}}^{1, 1}$, $g \in H_{\frac{30}{13}}^1$, and $h \in L_x^\frac{30}{13}$ we have
$$
\||x|^{-1}f_1f_2f_3gh\|_{L_x^\frac65} + \|f_1f_2f_3gh\|_{L_x^\frac65} \lesssim \prod_{j = 1}^3\|f_j\|_{H_{\mathbf L, \frac{30}{13}}^{1, 1}}\|g\|_{ H_{\frac{30}{13}}^1}\|h\|_{L_x^\frac{30}{13}}.
$$

\end{lem}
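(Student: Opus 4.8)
The plan is to mirror the proof of Lemma \ref{cubic-est}: split the five-fold product by Hölder's inequality and then control each factor by one of the weighted Sobolev inequalities (Lemmas \ref{ang-sobo}, \ref{ang-decay}), the Hardy inequality (Lemma \ref{hs-ineq}), or ordinary Sobolev embedding. The only genuinely new feature compared with the cubic case is that $b_2$ may now exceed $1$, so the growing weight $|x|^{b_2}$ cannot be loaded onto one factor; it must be distributed among the three factors $f_1,f_2,f_3$ that carry angular regularity. This distribution is also what pins down the threshold $\frac83$.

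For part (1) I would write $|x|^{b_2}=|x|^{c_1}|x|^{c_2}|x|^{c_3}$ with $c_1,c_2\in(0,1)$ and $c_3\in(0,\frac23)$, which is possible exactly when $b_2<1+1+\frac23=\frac83$. For the $|x|^{b_2}$ term I apply Hölder with exponents $(\infty,\infty,6,2,6)$ on $(f_1,f_2,f_3,g,h)$, bounding $\||x|^{c_1}f_1\|_{L_x^\infty}$ and $\||x|^{c_2}f_2\|_{L_x^\infty}$ by Lemma \ref{ang-sobo}, then $\||x|^{c_3}f_3\|_{L_x^6}$ by Lemma \ref{ang-decay} (choosing $\varepsilon$ with $\frac13<\varepsilon<1-c_3$, $p=6\varepsilon$, and Sobolev for $\|f_3\|_{L_x^{6\varepsilon}}$), while $g$ is taken in $L_x^2$ via $H^1\hookrightarrow L_x^2$ and $h\in L_x^6$ directly; the reciprocals sum to $0+0+\frac16+\frac12+\frac16=\frac56$. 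For the $|x|^{b_2-1}$ term I peel the extra $|x|^{-1}$ onto $g$ by Hardy, $\||x|^{-1}g\|_{L_x^2}\lesssim\|g\|_{H^1}$ (Lemma \ref{hs-ineq}), keeping the identical split of the positive weight over $f_1,f_2,f_3$. The endpoint $b_2=0$ I would treat, as in Lemma \ref{cubic-est}, by inserting an artificial $|x|^{\pm\varepsilon}$ and absorbing the negative part through Hardy.

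For part (2) the exponents are forced by the endpoint Strichartz pairing: $L_x^{6/5}$ is the spatial dual of $L_x^6$, and $(10,\frac{30}{13})$ is admissible with $H^1_{\frac{30}{13}}\hookrightarrow L_x^{10}$. The unweighted term closes cleanly by Hölder with $(10,10,10,10,\frac{30}{13})$, since $4\cdot\frac1{10}+\frac{13}{30}=\frac56$, each factor handled by Sobolev embedding. For the singular term $\||x|^{-1}f_1f_2f_3gh\|_{L_x^{6/5}}$ I would again try to absorb $|x|^{-1}$ by Hardy, now at exponent $\frac{30}{13}$ (legitimate since $1<3\cdot\frac{13}{30}$), via $\||x|^{-1}g\|_{L_x^{30/13}}\lesssim\|g\|_{H^1_{30/13}}$ or the analogous bound on one $f_j$.

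I expect the singular endpoint term of part (2) to be the main obstacle. Unlike part (1), no $L^2$-based angular Sobolev inequality is available for the $L_x^{30/13}$-based factors, so Hardy is the only device for the $|x|^{-1}$ singularity; however, paying the full Hardy cost pushes one factor from $L_x^{10}$ down to $L_x^{30/13}$, and a naive Hölder split then appears to overshoot the reciprocal budget $\frac56$. I therefore anticipate that the singularity must be \emph{shared} across several factors, and/or that one must invoke a general-$p$ version of the angular Sobolev embedding (the paper defines $H_{\mathbf L,p}^{n,\ell}$ for all $p$, which suggests such embeddings are in hand) to buy back the missing integrability. Getting this reciprocal bookkeeping to balance at exactly $\frac56$ — together with confirming that the $1+1+\frac23$ split in part (1) is the sharp route to $b_2<\frac83$ — is the delicate point; the remainder is a routine repetition of the cubic estimates with one extra factor of $|u|^2$.
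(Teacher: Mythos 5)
Your treatment of part (1) is correct and is essentially the paper's own argument. The paper distributes the weight symmetrically, bounding each factor by $\||x|^{b_2/3}f_j\|_{L_x^{18}}$ via Lemma \ref{ang-decay} with $b=\frac{b_2}{3}$, $p=18\varepsilon$ and $\frac19\le\varepsilon<\min(\frac13,1-\frac{b_2}{3})$ (the nonemptiness of this range is precisely $b_2<\frac83$), paired with $\||x|^{-1}g\|_{L_x^2}\|h\|_{L_x^6}$; the reciprocals $\frac{3}{18}+\frac12+\frac16=\frac56$ balance. Your asymmetric split $(\infty,\infty,6,2,6)$ with $c_1+c_2+c_3=b_2$, $c_1,c_2<1$, $c_3<\frac23$ balances identically and produces the same threshold, so this is a cosmetic variant rather than a different route. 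One caution at the endpoint $b_2=0$ of part (1): a single inserted pair $|x|^{\pm\varepsilon}$ as in Lemma \ref{cubic-est} does not balance here ($\frac16\cdot3+\frac12+\frac16=\frac76$); you must push two of the $f_j$ all the way to weighted $L^\infty$ via Lemma \ref{ang-sobo} and load $|x|^{-c_1-c_2-\varepsilon}$ onto $g$ with $c_1+c_2+\varepsilon\le1$ so that Lemma \ref{hs-ineq} still applies at $L^2$. (The paper does not prove this endpoint of part (1) at all; it passes directly to part (2).)

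The obstacle you flag for the singular term of part (2) is genuine, and no bookkeeping will dissolve it: with $h$ measured only in $L_x^{30/13}$ the claimed bound is false. Take $f_j=g=h=F(\cdot/\lambda)$ for a fixed smooth radial bump $F$ supported near the origin and let $\lambda\to0$: the left side scales like $\||x|^{-1}F^5(\cdot/\lambda)\|_{L^{6/5}}\sim\lambda^{3/2}$, while the right side scales like $(\lambda^{3/10})^{4}\,\lambda^{13/10}=\lambda^{5/2}$, so the ratio blows up. Consistently, the paper's own display for this case applies H\"older with exponents $(10,10,10,\frac{30}{13},\frac{30}{13})$, whose reciprocals sum to $\frac76\ne\frac56$, which is not a valid H\"older application on $\mathbb R^3$; your balanced split $(10,10,10,10,\frac{30}{13})$ for the unweighted term is the correct one. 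The repair is not to share the singularity among the $f_j$ but to upgrade $h$: in the application the weight $|x|^{b_2-1}=|x|^{-1}$ only ever multiplies $Q_2(u)$ or $\mathbf L Q_2(u)$, never the top-derivative expressions $\partial Q_2(u)$ or $\partial\mathbf L Q_2(u)$, so for those terms $h=u$ or $\mathbf L u$ lies in $W^{1,30/13}\hookrightarrow L_x^{10}$. With $h\in L_x^{10}$ the exponents $(10,10,10,\frac{30}{13},10)$ sum to $\frac{9+13+3}{30}=\frac56$, and the term closes using $\||x|^{-1}g\|_{L_x^{30/13}}\lesssim\|g\|_{\dot H^1_{30/13}}$ from Lemma \ref{hs-ineq} (valid since $1<\frac{3\cdot13}{30}$). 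So the singular half of part (2) should be stated and proved with $\|h\|_{L_x^{10}}$ (equivalently $\|h\|_{H^1_{30/13}}$) on the right-hand side; as written, neither your sketch nor the paper's proof establishes it, and it cannot be established.
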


\begin{proof} We first consider the case $b_2 > 0$.
Choose $\frac19 \le \varepsilon < \min(\frac13, 1- \frac{ b_2}3)$. Then from Lemma \ref{ang-decay} with $b = \frac{b_2}3, p = 18\varepsilon$ and Lemma \ref{hs-ineq} it follows that
\begin{align*}
\||x|^{b_2-1}f_1f_2f_3gh\|_{L_x^\frac65} &\lesssim \prod_{j = 1}^3\||x|^{\frac{b_2}3}f_j\|_{L_x^{18}}\||x|^{-1}g\|_{L_x^2}\|h\|_{L_x^6}\\
&\lesssim \prod_{j = 1}^3\|f_j\|_{H_{\mathbf L}^{1, 1}}^{3(1-\varepsilon)}\|f_j\|_{L_x^p}^{3\varepsilon}\|g\|_{ H^1}\|h\|_{L_x^6}.
\end{align*}
Since $2 \le p < 6$, Sobolev embedding ($H^1 \hookrightarrow L^p$) leads us to the desired estimate.

On the other hand, one can easily see that
\begin{align*}
\||x|^{b_2}f_1f_2f_3gh\|_{L_x^\frac65}  &\lesssim \prod_{j = 1}^3\||x|^{\frac{b_2}3}f_j\|_{L_x^{18}}\|g\|_{L_x^2}\|h\|_{L_x^6}\\
& \lesssim \prod_{j = 1}^3\|f_j\|_{H_{\mathbf L}^{1, 1}}\|g\|_{ H^1}\|h\|_{L_x^6}.
\end{align*}

If $b_2 = 0$, then we have
\begin{align*}
\||x|^{-1}f_1f_2f_3gh\|_{L_x^\frac65} + \|f_1f_2f_3gh\|_{L_x^\frac65} &\lesssim \prod_{j = 1}^3\|f_j\|_{L_x^{10}}(\||x|^{-1}g\|_{L_x^\frac{30}{13}} + \|g\|_{L_x^\frac{30}{13}})\|h\|_{L_x^\frac{30}{13}}\\
&\lesssim \prod_{j = 1}^3\|f_j\|_{H_{\mathbf L,\;\frac{30}{13}}^{1, 1}}^3\|g\|_{ H_{\frac{30}{13}}^1}\|h\|_{L_x^\frac{30}{13}}.
\end{align*}

\end{proof}

\begin{rem}\label{gen-case}
We can apply the above estimate to non-algebraic cases $|x|^b|f|^\alpha gh$ with $\frac13 < \alpha < 3, 0 < b < \alpha - \frac13$). By taking $\frac1{3\alpha} \le \varepsilon < 1- \frac b\alpha$, one can get
\begin{align*}
\||x|^{b-1}|f|^\alpha gh\|_{L_x^\frac65} &\le \||x|^b|f|^\alpha\|_{L_x^6}\||x|^{-1}g\|_{L_x^2}\|h\|_{L_x^6}\\
&\lesssim \||x|^{\frac b\alpha}f\|_{L_x^{6\alpha}}^\alpha\|g\|_{H^1}\|h\|_{L_x^6}\\
&\lesssim \|f\|_{H_{\mathbf L}^{1, 1}}^{\alpha(1-\varepsilon)}\|f\|_{L_x^{6\alpha\varepsilon}}^{\alpha\varepsilon}\|g\|_{H^1}\|h\|_{L_x^6}\\
&\lesssim \|f\|_{H_{\mathbf L}^{1, 1}}^\alpha\|g\|_{H^1}\|h\|_{L_x^6}.
\end{align*}
\end{rem}

\subsection{Proof of scattering}

Let us define a complete metric space $\xl(\rho)$ by
$$
\xl(\rho) := \Big\{u \in L_t^{10}H_{\mathbf L, \;\frac{30}{13}}^{1,\;1} \cap (C \cap L_t^\infty)(\mathbb R;  H_{\mathbf L}^{1, 1}) : \|u\|_{L_t^\infty H_{\mathbf L}^{1, 1}} + \|u\|_{L_t^2H_{\mathbf L, 6}^{1, 1}} + \| u\|_{L_t^{10}H_\frac{30}{13}^{1, 1}} \le \rho \Big\}
$$
equipped with the metric $d$ such that
$$
d(u ,v) =  \|u - v\|_{\xl} := \|u - v\|_{L_t^\infty H_{\mathbf L}} + \|u - v\|_{L_t^2H_{\mathbf L, 6}^1} + \|u-v\|_{L_t^{10}H_\frac{30}{13}^{1, 1}}.$$
Let us show that the nonlinear functional $\Psi(u) = e^{it\Delta}\varphi + \mathcal N(u)$ is a contraction on $\xl(\rho)$.
For this we have only to show
\begin{align}
&\|\mathcal N(u)\|_{\xl} \lesssim \|u\|_{\xl}^3 + \|u\|_{\xl}^5,\label{contract1}\\
&\|\mathcal N(u) - \mathcal N(u)\|_{\xl} \lesssim [(\|u\|_{\xl} + \|v\|_{\xl})^2 + (\|u\|_{\xl} + \|v\|_{\xl})^4]\|u-v\|_{\xl}.\label{contract2}
\end{align}
Clearly, $\|e^{it\Delta} \varphi\|_{\xl} \lesssim \|\varphi\|_{H_{\mathbf L}^{1,\; 1}}$ by Strichartz estimates, and thus we can find $\rho$ small enough for $\Psi$ to be a contraction mapping on $\xl(\rho)$, and for the equation \eqref{main eqn} to be globally well-posed in $H_{\mathbf L}^{1,\; 1}$.

Once \eqref{main eqn} is globally well-posed, the scattering is straightforward. In fact, let us define a scattering state $u_\pm$ with $$\varphi_\pm := \varphi + \lim_{t\to \pm\infty} e^{-it\Delta}\mathcal N(u).$$ Then we get the desired result by the duality argument:
\begin{align*}
\|u(t) - u_\pm(t)\|_{H_{\mathbf L}^{1,\;1}} &= \|(1-\Delta)^\frac12\mathbf L(u(t) - u_\pm(t))\|_{L^2} \\
&= \sup_{\|\psi\|_{L^2} \le 1}\left|\int_t^{\pm \infty}\bl  (1-\Delta)^\frac12\mathbf L[K_1Q_1(u) + K_2Q_2(u)], e^{-it'\Delta}\psi \br\,dt'\right|\\
&\le \|(1-\Delta)^\frac12 \mathbf L [K_1Q_1(u) + K_2Q_2(u)]\|_{L^2(t, \pm\infty; L_x^\frac65)}\|e^{-it'\Delta}\psi\|_{L_t^2L_x^6}\\
& \lesssim \rho^2(\|u\|_{L^2(t, \pm\infty; H_{\mathbf L,\; 6}^{1,\; 1})}+ \|u\|_{L^{10}(t, \pm\infty; H_{\frac{30}{13}^{1, \;1}})}) \to 0\quad \mbox{as}\quad t \to \pm \infty.
\end{align*}
Here $(t, \pm\infty)$ means that $(t, +\infty)$ if $t > 0$ and $(-\infty, t)$ if $t < 0$.

Now it remains to show \eqref{contract1} and \eqref{contract2}. Given $\rho$, for $u \in \xl(\rho)$ we consider $N_l^{1, \;1}$ as in Section \ref{sec4}.
From the bound \eqref{k} and the endpoint Strichartz estimate it follows that
\begin{align*}
\left\|N_l^{1, \;1}\right\|_{L_t^2L_x^6 \cap L_t^{10}L_x^\frac{30}{13}} &\lesssim \|(\partial \mathbf L K_l)Q_l(u)\|_{L_t^2L_x^\frac65} + \|\partial K_l \mathbf L Q_l(u)\|_{L_t^2L_x^\frac65} + \|\mathbf L K_l \partial Q_l(u)\|_{L_t^2L_x^\frac65} + \|K_l \partial \mathbf L Q_l(u)\|_{L_t^2L_x^\frac65}\\
&\lesssim \||x|^{b_l -1}|Q_l(u)|\|_{L_t^2L_x^\frac65} + \||x|^{b_l -1}|\mathbf L Q_l(u)|\|_{L_t^2L_x^\frac65} +  \||x|^{b_l}(|\partial Q_l(u)| + |\partial \mathbf L Q_l(u)|)\|_{L_t^2L_x^\frac65}
\end{align*}
Since $|\mathbf L Q_1(u)| \lesssim |u|^2|\mathbf L u|$, $|\mathbf L Q_1(u) \lesssim |u|^2|\partial u|$, and $|\partial \mathbf L Q_l(u)| \lesssim |u||\partial u||\mathbf L u| + |u|^2|\partial \mathbf L u|$,
applying Lemma \ref{cubic-est} with $f = u$, $g = u$ or $|\mathbf L u|$, and $h = u, \partial u,$ or $|\partial \mathbf L u|$, we get
\begin{align*}
\|N_1^{1,\; 1}\|_{L_t^2L_x^6  \cap L_t^{10}L_x^\frac{30}{13}} \lesssim \|u\|_{\xl}^3.
\end{align*}
As for $Q_2(u)$, there hold
$|\mathbf L Q_2(u)| \lesssim |u|^4|\mathbf L u|$, $|\mathbf L Q_2(u) \lesssim |u|^4|\partial u|$, and $|\partial \mathbf L Q_2(u)| \lesssim |u|^3|\partial u||\mathbf L u| + |u|^4|\partial \mathbf L u|$.
Thus by taking $f_j = u$, $g = u$ or $|\mathbf L u|$, and $h = u, \partial u,$ or $|\partial \mathbf L u|$ we obtain from Lemma \ref{quintic-est}
\begin{align*}
\left\|N_2^{1,\; 1}\right\|_{L_t^2L_x^6  \cap L_t^{10}L_x^\frac{30}{13}}  \lesssim \|u\|_{\xl}^5.
\end{align*}
These show the estimate \eqref{contract1}.

To treat \eqref{contract2} let us set $w = u - v$. Then $Q_l(u) - Q_l(v)$ can be decomposed by new cubic and quintic terms of $u, v$ and only one $w$. Applying the same argument as above to these terms, one readily get the second part \eqref{contract2}. This completes the proof of Theorem \ref{scattering}.

\section{Proof of non-scattering}
We follow the argument as in \cite{ba, choz0}. By contradiction we assume that $\|\varphi_+\|_{L_x^2} \neq 0$. Since $K_l$ are real-valued, $m(u(t)) = m(\varphi)$.
We consider $H(t) = -{\rm Im}\int u(t)\overline{u_+(t)}\,dx$ for $t \gg 1$. Differentiating $H$, we get
$$
\frac{d}{dt}H(t) = {\rm Re}\int (K_1Q_1 + K_2Q_2) \overline{u_+}\,dx,
$$
where $\lambda = 0, 1$. We decompose this as follows:
$$
\frac{d}{dt}H(t) = \sum_{j = 1}^3J_1^j +  J_2,
$$
where
\begin{align*}
J_1^1 &= \int |x|^{b_1}|u_+|^4\,dx,\\
J_1^2 &= {\rm Re}\int |x|^{b_1}(|u|^2-|u_+|^2)|u_+|^2\,dx,\\
J_1^3 &= {\rm Re}\int |x|^{b_1}|u|^2(u-u_+) \overline{u_+}\,dx,
\end{align*}
and
\begin{align*}
J_2 = {\rm Re}\int |x|^{b_2}|u|^4uu_+\,dx.
\end{align*}

We estimate $J_1^1$ as follows: for $0 < \delta \ll 1 \ll k$
$$
\int_{\delta t \le |x| \le kt}|u_+|^2\,dx \le \||x|^{-\frac{b_1}2}\|_{L_x^2(\delta t \le |x| \le kt)}(J_1^1)^\frac12
\lesssim t^{\frac32-\frac{b_1}2}(J_1^1)^\frac12.
$$

It was show in \cite{ba, choz0} that $\int_{\delta t \le |x| \le kt}|u_+|^2\,dx \sim \|\varphi_+\|_{L^2}^2$ for some fixed large $k$ and small $\delta$, and for any large $t$.
From this we deduce that $$J_1^1 \gtrsim_{m(\varphi_+)} t^{-(3-b_1)}.$$

Let us denote the generator of Galilean transformation by $\mathbf J$, that is $\mathbf J = e^{-it\Delta}x e^{it\Delta}$. On the sufficiently regular function space
\begin{align}\label{j}
\mathbf J = x + 2it\nabla, \quad (\mathbf J \cdot \mathbf J)^m = (|x|^2 - 4tA - 4t^2\Delta)^m,
\end{align}
where $A$ is the self-adjoint dilation operator defined by $\frac1{2i}(x\cdot \nabla + \nabla \cdot x)$, which  yields $\mathcal A = \int \overline{u}Au\,dx$.
Since $\|u_+(t)\|_{L_x^\infty} \lesssim t^{-\frac32}\|\varphi_+\|_{L_x^1}$,
and
$$
\||x|^{2m} u_+(t)\|_{L_x^\infty} = \|e^{it\Delta}(\mathbf J \cdot \mathbf J)^{m}\varphi_+\|_{L_x^\infty} \lesssim_{\varphi_+}  t^{-(\frac32-2m)},
$$
by interpolation we see that
\begin{align}\label{t-decay}
\||x|^\theta u_+(t)\|_{L_x^\infty} \lesssim_{\varphi_+} t^{-(\frac32-\theta)}
\end{align}
for any $\theta > 0$.
By this we get $\||x|^{-\frac{b_1}3}u_+(t)\|_{L_x^\frac32(\delta t \le |x| \le kt)}(J_1^2)^\frac13  \lesssim t^{2-\frac{b_1}3}(J_1^2)^\frac13$

For $J_1^2$ we have
\begin{align*}
J_1^2 &\lesssim \||x|^{b_1}|u_+(t)|^2\|_{L_x^\infty}(\|u\|_{L_x^2} + \|u_+\|_{L_x^2})\|u - u_+\|_{L_x^2}.
\end{align*}
 Using \eqref{t-decay} we get $$|J_1^2| = o_{m(\varphi),\; \varphi_+}(t^{-(3-b_1)}).$$

To estimate $J_1^3$ and $J_2$ we need the following lemma.
\begin{lem}\label{pot-decay}
Let $u$ be a global smooth solution of \eqref{main eqn} with $K_1 = |x|^{b_1}, K_2 = |x|^{b_2}$ such that $0 \le b_2 < 3 +b_1$, and  $xu \in C(\mathbb R; L_x^2)$. Then for any large $t$ there holds
$$
V(u) := \frac14\int |x|^{b_1}|u|^4\,dx + \frac16\int |x|^{b_2}|u|^6\,dx \le C(m(\varphi), E(\varphi), \||x|\varphi\|_{L_x^2})\; t^{-(3-b_1)}.
$$
\end{lem}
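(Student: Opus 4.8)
The plan is to run a pseudo-conformal virial argument in which the time weight is tuned so as to annihilate the cubic potential term exactly. First I would introduce the pseudo-conformal energy
$$
\Sigma(t) := \|\mathbf J u(t)\|_{L_x^2}^2 + 8t^2 V(u(t)).
$$
Expanding $\mathbf J = x + 2it\nabla$ gives the algebraic identity $\|\mathbf J u\|_{L_x^2}^2 = \||x|u\|_{L_x^2}^2 + 4t^2\|\nabla u\|_{L_x^2}^2 - 4t\,\mathcal A(t)$, where $\mathcal A(t) = {\rm Im}\int \overline u\, x\cdot\nabla u\,dx$ is the dilation quantity of Section 4. Since $E(\varphi)$ is conserved ($K_l=|x|^{b_l}$ being real) and $4t^2\|\nabla u\|_{L_x^2}^2 + 8t^2 V(u) = 8t^2 E(\varphi)$, this collapses to the compact form
$$
\Sigma(t) = \||x|u(t)\|_{L_x^2}^2 - 4t\,\mathcal A(t) + 8t^2 E(\varphi).
$$

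Next I would differentiate, using the virial identities \eqref{virial}--\eqref{dilation} already established in Section 4. Abbreviating $P_1 = \int|x|^{b_1}|u|^4\,dx$ and $P_2 = \int|x|^{b_2}|u|^6\,dx$, those give $\frac{d}{dt}\||x|u\|_{L_x^2}^2 = 4\mathcal A$, and differentiating \eqref{dilation} together with $x\cdot\nabla K_l = b_l|x|^{b_l}$ yields
$$
\frac{d}{dt}\mathcal A(t) = 4E(\varphi) + \tfrac{1-b_1}{2}P_1 + \tfrac{4-b_2}{3}P_2.
$$
Substituting into the compact form of $\Sigma$, the $E(\varphi)$ terms cancel and one is left with $\frac{d}{dt}\Sigma = 2(b_1-1)t\,P_1 + \tfrac43(b_2-4)t\,P_2$.

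The decisive step is to test the decay weight $t^{-(b_1-1)}$. A direct computation gives
$$
\frac{d}{dt}\bigl(t^{-(b_1-1)}\Sigma(t)\bigr) = t^{-(b_1-1)}\Bigl[\tfrac43(b_2-b_1-3)\,t\,P_2 - \tfrac{b_1-1}{t}\|\mathbf J u\|_{L_x^2}^2\Bigr],
$$
in which the cubic contribution $P_1$ cancels identically; this is precisely why the exponent $b_1-1$ is forced. Both surviving terms are non-positive: the first by the hypothesis $b_2 < 3+b_1$, the second because $b_1\ge 2$. Hence $t\mapsto t^{-(b_1-1)}\Sigma(t)$ is non-increasing, so $\Sigma(t)\le\Sigma(1)\,t^{b_1-1}$ for $t\ge1$, and since $8t^2 V(u)\le\Sigma(t)$ I obtain $V(u)\le\tfrac18\Sigma(1)\,t^{-(3-b_1)}$.

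It then remains to bound the reference value $\Sigma(1)$ by the asserted data quantities. From $E(\varphi)\ge\frac12\|\nabla u\|_{L_x^2}^2$ (all potential terms being non-negative) one has $\|\nabla u\|_{L_x^2}\le\sqrt{2E(\varphi)}$; feeding $|\mathcal A|\le\||x|u\|_{L_x^2}\|\nabla u\|_{L_x^2}$ into \eqref{virial} and a short Gronwall step gives $\||x|u(t)\|_{L_x^2}\lesssim\||x|\varphi\|_{L_x^2} + t\sqrt{E(\varphi)}$, which at $t=1$ controls both $\||x|u(1)\|_{L_x^2}$ and $\mathcal A(1)$, so that $\Sigma(1)\le C(m(\varphi),E(\varphi),\||x|\varphi\|_{L_x^2})$. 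The one genuine obstacle is rigor rather than algebra: the differentiations above are only formal at the $H_{\mathbf L}^{1,1}$ regularity, so I would carry them out on the regularized quantities ${\rm Im}\int\theta_\varepsilon\,\overline u\,x\cdot\nabla u\,dx$ with $\theta_\varepsilon = e^{-\varepsilon|x|^2}$ and pass to the limit $\varepsilon\to0$ exactly as in the proof of \eqref{virial}--\eqref{dilation}, the smoothness of $u$ and the hypothesis $xu\in C(\mathbb R;L_x^2)$ being what legitimizes this limiting procedure.
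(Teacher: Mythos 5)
Your proposal is correct and follows essentially the same route as the paper: both rest on the pseudo-conformal identity $\frac{d}{dt}\bigl(\|\mathbf J u\|_{L_x^2}^2 + 8t^2V(u)\bigr) = -4t\bigl[\frac{1-b_1}{2}\int|x|^{b_1}|u|^4 + \frac{4-b_2}{3}\int|x|^{b_2}|u|^6\bigr]$ derived from \eqref{j} and \eqref{dilation}, and both exploit $b_2<3+b_1$ to absorb the quintic term. Your integrating-factor monotonicity of $t^{-(b_1-1)}\Sigma(t)$ is just the Gronwall step of the paper in a cleaner (and slightly sharper, since it retains $\|\mathbf Ju\|^2$) packaging, and your explicit control of $\Sigma(1)$ supplies the detail the paper leaves implicit.
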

From Lemma \ref{pot-decay} and inequality \eqref{t-decay} it follows that
$$
|J_1^3| \le (\int|x|^{b_1}|u|^4\,dx)^\frac12\|u - u_+\|_{L_x^2}\||x|^\frac{b_1}2u_+\|_{L_x^\infty} = o_{\varphi, \; \varphi_+}(t^{-(3-b_1)}),
$$
\begin{align*}
|J_2| &\le  \int |x|^{\frac{5b_2}6}|u|^5|x|^\frac{b_2}6|u_+|\,dx = (\int |x|^{b_2}|u|^6\,dx)^\frac56\||x|^\frac{b_2}4|u_+|\|_{L_x^\infty}^\frac23\|u_+\|_{L_x^2}^\frac13 \lesssim_{\varphi,\; \varphi_+}t^{-(\frac72 - \frac{5b_1}6-\frac{b_2}6)}\\
& =  o_{\varphi, \;\varphi_+}(t^{-(3-b_1)}) \quad(\because b_2 < 3+b_1).
\end{align*}
Therefore we conclude that for $t \gg 1$
$$
\frac{d}{dt}H(t) \gtrsim_{\varphi, \varphi_+} t^{-(3-b_1)}.
$$
Since $H(t)$ is uniformly bounded for any $t \ge 0$, the range $b_1 \ge 2$ leads us to the contradiction to the assumption $\|\varphi_+\|_{L_x^2} \neq 0$. By time symmetry a similar argument holds for negative time. We omit that part.

\begin{proof}[Proof of Lemma \ref{pot-decay}]
From \eqref{j} and \eqref{dilation} we deduce the pseudo-conformal identity:
\begin{align*}
\frac{d}{dt}\int [|\mathbf J u|^2 + 8t^2V(u)]\,dx &= -4t \left[\frac12 \int (K_1- x\cdot \nabla K_1)|u|^4\,dx + \frac13\int(4K_2-x\cdot \nabla K_2)|u|^6 \,dx\right]\\
&= -4t \left[\frac{1-b_1}2 \int |x|^{b_1}|u|^4\,dx + \frac{4-b_2}3\int|x|^{b_2}|u|^6 \,dx\right].
\end{align*}
Since $b_1 \ge 2$ and $0 \le b_2 < 3+ b_1$, by integrating over $[0, t]$ we obtain
\begin{align*}
t^2V(u(t)) &\le \frac18\||x|\varphi\|_{L_x^2}^2 + (b_1-1)\int_0^t \tau V(u(\tau))\,d\tau\\
&\le \frac18\||x|\varphi\|_{L_x^2}^2 + (b_1-1)\int_0^1 \tau V(u(\tau))\,\tau + (b_1-1)\int_1^t\tau V(u(\tau))\,d\tau\\
&\le C(m(\varphi), E(\varphi), \||x|\varphi\|_{L_x^2})) + (b_1-1)\int_1^t\tau V(u(\tau))\,d\tau.
\end{align*}
Gronwall's inequality gives us
$$
t^2V(u(t)) \le C(m(\varphi), E(\varphi), \||x|\varphi\|_{L_x^2}))\exp\left[\int_1^t \frac{b_1-1}\tau\,d\tau\right] = C(m(\varphi), E(\varphi), \||x|\varphi\|_{L_x^2}))\;t^{b_1-1}.
$$
This completes the proof of Lemma \ref{pot-decay}.
\end{proof}

\section*{Acknowledgements}
This work was supported by NRF-2018R1D1A3B07047782(Republic of Korea).
\medskip


\end{document}